\documentclass{amsart}

\usepackage{amsmath,amssymb}
\usepackage{amsthm}
\usepackage{mathtools}


\renewcommand{\Re}{\operatorname{Re}}

\renewcommand{\tan}{\operatorname{tg}}

\DeclarePairedDelimiterX\innerp[2]{\langle}{\rangle}{#1, #2}
\DeclarePairedDelimiterX\ccint[2]{\lbrack}{\rbrack}{#1, #2}

\newtheorem{conjecture}{Conjecture}
\newtheorem{theorem}{Theorem}
\newtheorem{lemma}{Lemma}

\author{Petar Melentijevi\'c}
\thanks{The author is partially supported by MPNTR grant 174017, Serbia}
\title[Khavinson conjecture]{A proof of the Khavinson conjecture }
\subjclass[2010]{Primary 35B30, Secondary 35J05}
\keywords{Khavinson problem, Bounded harmonic functions, Gradient of function, Radial derivative, Sharp estimate, Unit ball}

\begin{document}
\begin{abstract}
  This paper deals with an extremal problem for bounded harmonic functions in
  the unit ball $\mathbb{B}^n.$ We solve the Khavinson conjecture in $\mathbb{R}^3,$
  an intriguing open question since 1992 posed by D. Khavinson, later considered in a general context by Kresin and Maz'ya. Precisely, we obtain the following inequality:
  $$|\nabla u(x)|\leq \frac{1}{\rho^2}\bigg({\frac{(1+\frac{1}{3}\rho^2)^{\frac{3}{2}}}{1-\rho^2}-1}\bigg) \sup_{|y|<1} |u(y)|, $$
  with $\rho=|x|,$ thus sharpening the previously known with $|\langle \nabla  u(x),n_{x} \rangle |$ instead of $|\nabla u(x)|, $ where $n_{x}=\frac{x}{|x|}.$
\end{abstract}
\maketitle

\section{Introduction}
 
\subsection{Gradient estimates for harmonic and analytic functions.}
Sharp estimates of harmonic functions are important at many places in physics.
According to \cite{ProtterWeinberger} by Protter and Weinberger, these problems
arose naturally in the theory of hydrodynamics of ideal or the viscous
incompressible fluids, elasticity theory, electrostatics and others.
 
Many of such sharp estimates are known. We recall here some of them.
 
First, in the mentioned book of Protter and Weinberger there is the following
estimate for the absolute value of the gradient of a harmonic function:
\begin{equation}
\label{eq:1}
|\nabla u(x)|\leq \frac{n\omega_{n-1}}{(n-1)\omega_{n}d(x)} \operatorname{osc}_{\Omega}(u),
\end{equation}
where $u$ is harmonic function in $\Omega,$ $\omega_{n}$ is the area of the unit
sphere $\mathbb{S}^{n-1}=\partial\mathbb{B}^n$, $\operatorname{osc}(u)$ is the
oscillation of $u$ in $\Omega,$ while $d(x)$ denotes the distance of $x \in
\Omega$ from $\partial\Omega.$ The inequality \eqref{eq:1} is a consequence of the next
best-constant inequality
\begin{equation}
\label{eq:2}
|\nabla u(0)|\leq \frac{2n\omega_{n-1}}{(n-1)\omega_{n}R}\sup_{|x|<R} |u(x)|,
\end{equation}
see Khavinson \cite{Khavinson}, Burgeth \cite{Burgeth}.
 
Some inequalities from analytic function theory can also be restated as
inequalities for harmonic functions. Such are the so-called real-part theorems,
with some characteristics of the real-parts of a function as the majorant. It is
the case with Hadamard real-part theorem \cite{Hadamard} and the whole
collection of similar inequalities in \cite{KresinMazya3}. Also, some pointwise
sharp estimates for modulus of derivatives of analytic functions can be found in
\cite{MacintyreRogosinski}. We will mention that:
$$
|f'(z)|\leq \frac{4}{\pi}\frac{1}{1-|z|^2} \sup_{|w|<1}|\Re f(w)|,
$$
for analytic functions can be also seen as
$$
|\nabla u(z)|\leq  \frac{4}{\pi }\frac{1}{1-|z|^2} \sup_{|w|<1}|u(w)|
$$
for a harmonic function in the unit disk $\mathbb{D}:=\{z \in \mathbb{C}:
|z|<1\}.$
  The last classical result is improved by Kalaj and Vuorinen in
\cite{KalajVuorinen}; their form of this inequality is
\begin{equation}
\label{eq:3}
 |\nabla u(z)|\leq  \frac{4}{\pi}\frac{1-|u(z)|^2}{1-|z|^2}, 
\end{equation}
for real-valued harmonic functions with $|u(z)|<1$ for every $z \in \mathbb{D}.$
The inequality \eqref{eq:3} is equivalent to the fact that harmonic functions with the
assumed properties are Lipschitz with constant $\frac{4}{\pi}$ with respect to
hyperbolic metric. Mateljević in \cite{Mateljevic} showed that this result can
be concluded from Ahlfors-Schwarz lemma. Let us mention that similar results for
harmonic functions and hyperbolic metric are contained in papers
\cite{Markovic2},\cite{Colonna} and \cite{Melentijevic}. Also, some sharp
inequalities for harmonic functions are given in \cite{KalajMarkovic1} and
\cite{KalajMarkovic2}.

\subsection{The Khavinson problem}
In his paper \cite{Khavinson} from 1992, Dmitriy Khavinson found the sharp
pointwise constant in the estimate for the absolute value of the radial
derivative of a bounded harmonic function in the unit ball $\mathbb{B}^3:=\{x \in \mathbb{R}^3: |x|<1\}.$
  
In a private communication with Vladimir Maz'ya and Gershon Kresin he said that he
believed that the stronger inequality holds for the modulus of the gradient of a
bounded harmonic function in place of the radial derivative.
 
To give the precise statement of the problem, we introduce some notation. We
consider bounded harmonic functions in $\mathbb{B}^n$---it is common to denote
this function space by $h^{\infty},$ see
\cite{AxlerBourdonRamey},\cite{Pavlovic}. For every $l \in \mathbb{S}^{n-1}$ by
$C(x,l)$ we denote the best constant in the next inequality for the derivative
of $u \in h^{\infty}$ at $x \in \mathbb{B}^n$ in the direction $v$: 
\begin{equation}
\label{eq:4}
|\langle \nabla u(x), l \rangle| \leq C(x,l) \sup_{|y|<1}|u(y)|,
\end{equation}
while for the appropriate constant for the modulus of the gradient we use
$C(x):$ 
\begin{equation} 
\label{eq:5}
|\nabla u(x)| \leq C(x) \sup_{|y|<1}|u(y)|.
\end{equation}
 
Since 
$$|\nabla u(x)|=\sup_{|l|=1}|\langle \nabla u(x), l \rangle|,$$
we clearly have that
\begin{equation}
\label{eq:6}
C(x)=\sup_{|l|=1}C(x,l).
\end{equation}
We are especially interested for radial direction, which is, for $x \in
\mathbb{B}^n,$ defined as $n_{x}=\frac{1}{|x|}x,$ where
$|x|=\sqrt{x_1^2+x_2^2+\dots+x_n^2} $ is the norm of $x.$
 
In their paper \cite{KresinMazya1}, Kresin and Maz'ya posed the generalized
Khavinson problem for bounded harmonic functions in the unit ball
$\mathbb{B}^n,$ as:
 
\begin{conjecture}[\cite{KresinMazya1}, \cite{Khavinson}]
 $$	C(x)=C(x,n_{x}).$$
\end{conjecture}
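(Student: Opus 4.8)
The plan is to reduce the conjecture to a one‑parameter extremal inequality and then verify it. Every $u\in h^{\infty}(\mathbb{B}^n)$ is the Poisson integral of its boundary values: $u(x)=\int_{\mathbb{S}^{n-1}}P(x,\zeta)f(\zeta)\,d\sigma(\zeta)$, where $P(x,\zeta)=\frac{1-|x|^2}{|x-\zeta|^n}$, $d\sigma$ is normalized surface measure, and $\|f\|_{L^{\infty}}=\sup_{|y|<1}|u(y)|$. Differentiating under the integral sign and testing against $f=\sgn\innerp{\nabla_x P(x,\cdot)}{l}$ (admissible, and it gives equality since the kernel's $l$-derivative vanishes only on a null set) yields the exact value
\[
C(x,l)=\int_{\mathbb{S}^{n-1}}\abs*{\innerp{\nabla_x P(x,\zeta)}{l}}\,d\sigma(\zeta).
\]
By rotational invariance of $\mathbb{B}^n$ we may take $x=\rho e_n$ with $\rho=|x|$, so that $n_x=e_n$; and since the integrand is invariant under rotations fixing the $e_n$-axis, $C(x,l)$ depends only on the angle $\gamma$ between $l$ and $e_n$. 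Writing $l_\gamma=\cos\gamma\,e_n+\sin\gamma\,e_1$, $g(\gamma):=C(\rho e_n,l_\gamma)$, and noting $g(\pi-\gamma)=g(\gamma)$, the conjecture $C(x)=C(x,n_x)$ is, via \eqref{eq:6}, equivalent to $g(\gamma)\le g(0)$ for all $\gamma\in(0,\pi/2]$.

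Next I would make the kernel explicit. With $t=\zeta_n$, $r^2=|x-\zeta|^2=1+\rho^2-2\rho t$, and $\zeta'$ the component of $\zeta$ orthogonal to $e_n$, a direct computation gives $\nabla_x P(\rho e_n,\zeta)=A(t)\,e_n+C(t)\,\zeta'$ with $C(t)>0$, and in $\mathbb{R}^3$
\[
A(t)=\frac{(3+\rho^2)(t-t_0)}{r^{5}},\qquad t_0=\frac{\rho(5-\rho^2)}{3+\rho^2}\in(0,1),\qquad C(t)=\frac{3(1-\rho^2)}{r^{5}}.
\]
Hence $g(0)=\frac12\int_{-1}^{1}\abs{A(t)}\,dt$, and evaluating this elementary integral (splitting at $t_0$, substituting $s=r^2$) reproduces Khavinson's radial constant $\frac{1}{\rho^2}\bigl(\frac{(1+\rho^2/3)^{3/2}}{1-\rho^2}-1\bigr)$; once the conjecture is proved this equals $C(x)$, which is the bound in the abstract. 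To handle $g(\gamma)$, slice $\mathbb{S}^2$ by $t$ and the azimuthal angle $\phi$ (so $\zeta_1=\sqrt{1-t^2}\cos\phi$), giving $g(\gamma)=\frac{1}{4\pi}\int_{-1}^{1}\int_0^{2\pi}\abs{\cos\gamma\,A(t)+\sin\gamma\,C(t)\sqrt{1-t^2}\cos\phi}\,d\phi\,dt$, and use the elementary identity $\int_0^{2\pi}\abs{a+b\cos\phi}\,d\phi=\int_0^{2\pi}\max(\abs a,\abs b\,\abs{\cos\phi})\,d\phi$ together with $\max(p,q)=p+(q-p)^{+}$. Since $\cos\gamma,\sin\gamma,C(t),\sqrt{1-t^2}\ge 0$, this yields the clean decomposition
\[
g(\gamma)=\cos\gamma\cdot g(0)+\frac{1}{4\pi}\int_{-1}^{1}\int_{0}^{2\pi}\bigl(\sin\gamma\,C(t)\sqrt{1-t^{2}}\,\abs{\cos\phi}-\cos\gamma\,\abs{A(t)}\bigr)^{+}\,d\phi\,dt ,
\]
so that $g(\gamma)\le g(0)$ is equivalent to bounding the (nonnegative) correction term above by $(1-\cos\gamma)\,g(0)$.

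The main obstacle is precisely this bound, and it cannot be obtained pointwise in $t$: near $t=t_0$ the radial part $A$ vanishes while the horizontal part $C(t)\zeta'$ does not, so there the integrand of $g(\gamma)$ strictly exceeds that of $g(0)$, and the resulting deficit must be absorbed by the surplus $(3+\rho^2)\abs{t-t_0}(1-\cos\gamma)$ produced where $\abs{t-t_0}$ is bounded away from $0$. I would attack this by carrying out the $\phi$‑integration of the positive part in closed form — it involves $\sqrt{\beta^2(1-t^2)-\alpha^2(t-t_0)^2}$ and an $\arccos$ term, with $\alpha=\cos\gamma(3+\rho^2)$, $\beta=3\sin\gamma(1-\rho^2)$ — then splitting $[-1,1]$ at $t_0$ and at the threshold where this branch switches off, exploiting the common factor $r^{-5}$, and introducing a substitution (or an affine change of variable in $t$) that maps the deficit interval around $t_0$ into part of the surplus region; this should reduce the whole statement to a one‑variable inequality in $\rho$ (or in $\gamma$), to be verified directly, presumably after separating the ranges of small and large $\rho\in(0,1)$ and using monotonicity of a suitable auxiliary function. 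Once $g(\gamma)\le g(0)$ is established for all $\gamma\in(0,\pi/2]$ and $\rho\in(0,1)$, formula \eqref{eq:6} gives $C(x)=\sup_{|l|=1}C(x,l)=g(0)=C(x,n_x)$, which is the conjecture, and substituting the value of $g(0)$ yields the sharp estimate in the abstract.
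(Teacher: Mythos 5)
Your setup is sound, but it only reproduces what was already known before this paper: the formula $C(x,l)=\int_{\mathbb{S}^{n-1}}\lvert\langle\nabla_xP(x,\zeta),l\rangle\rvert\,d\sigma(\zeta)$ is \eqref{eq:12}, the reduction to $x=\rho e_n$ and a single angular parameter is Markovi\'c's, and your explicit kernel in $\mathbb{R}^3$ is correct (indeed $A(t)=(3+\rho^2)(t-t_0)r^{-5}$ with $t_0=\rho(5-\rho^2)/(3+\rho^2)$, and the identity $\int_0^{2\pi}\lvert a+b\cos\phi\rvert\,d\phi=\int_0^{2\pi}\max(\lvert a\rvert,\lvert b\cos\phi\rvert)\,d\phi$ checks out, giving a clean way to split off $\cos\gamma\, g(0)$). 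The proof, however, stops exactly where the conjecture begins. The entire content of the statement is the inequality $g(\gamma)\le g(0)$, i.e.\ that your nonnegative correction term is bounded by $(1-\cos\gamma)\,g(0)$, and for this you offer only a plan (``I would attack this by\dots'', ``this should reduce the whole statement to a one-variable inequality\dots to be verified directly''). Nothing is verified. You correctly observe that the comparison cannot be made pointwise in $t$ --- near $t=t_0$ the integrand of $g(\gamma)$ strictly exceeds that of $g(0)$ --- so a genuine global cancellation must be exhibited; that global step is precisely what is missing, and it is not routine: it is the reason the problem was open since 1992 and why the odd-dimensional case resisted the method of \cite{Kalaj}.

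The paper supplies the missing mechanism in a different way: it never compares the two integrands directly. It expands the spherical integral into a one-dimensional integral against a hypergeometric kernel (Theorem 1), proves exact closed-form identities for the moments of the ``excess'' kernel ${}_2F_1(\cdot)-1$ against $1$ and against $(\tfrac{n-2}{n}\rho\cos\alpha-x)^2$ (Lemmas \ref{trecaLema}--\ref{petaLema}), dominates the intractable absolute-value moment via Cauchy--Schwarz and the AM--GM inequality to get the majorant $S+\tfrac13S_1+\tfrac34S_2$ agreeing with the true constant at $\alpha=0$, and then shows this majorant is increasing in $t=\cos^2\alpha$ by proving $a_k(\rho)<0$ for $k\ge2$ and $T_{-}'(1)\ge0$ (Lemmas \ref{sestaLema}, \ref{sedmaLema}). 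Those identities are exactly the ``surplus absorbs deficit'' bookkeeping your sketch hopes to achieve by an ad hoc change of variables. If you want to complete your route you must actually carry out the $\phi$- and $t$-integrations of the positive part and prove the resulting two-parameter inequality in $(\rho,\gamma)$; be warned that the margin is delicate --- in the paper's final step the corresponding nonnegativity reduces to $121\rho^8+\tfrac{10040}{27}\rho^6+\tfrac{640}{3}\rho^4\ge0$, a quantity that degenerates as $\rho\to0$ --- so crude bounds on the correction term will not close the argument.
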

  
In the same paper, they obtain the sharp inequalities for the radial and
tangential derivatives of such functions and solved the analogous problem for
harmonic functions with the $L^p$ integrable boundary values for $p=1$ and
$p=\infty$. Also, the same authors in \cite{KresinMazya2} solved the half-space
analog of this problem for $p=1,\, p=2$ and $p=\infty.$ In fact, they precisely
proved:
\begin{equation}
\label{eq:7}
|\nabla U(x)| \leq \frac{4}{\sqrt{\pi}}
\frac{(n-1)^{\frac{n-1}{2}}}{n^{\frac{n}{2}}}
\frac{\Gamma(\frac{n}{2})}{\Gamma(\frac{n-1}{2})}\frac{1}{x_n}
\sup\limits_{y \in \mathbb{R}_+^n} |U(y)|,
\end{equation}
for a real bounded harmonic function $U$ in the
$n$-dimensional upper-half space $\mathbb{R}_+^n.$
 
\subsection{Partial solutions of generalised Khavinson problem}
Marijan Marković considered the problem in special situation when $x \in
\mathbb{B}^n$ is near the boundary. He confirmed Khavinson conjecture in this
setting and also gave some conclusions and formulas in general. We will appeal
to some of his conclusions later. As first, let us say that he showed that it is
enough to prove the conjecture in special case $x=\rho e_1$ considering only
directions of the form $l=l_{\alpha}=\cos \alpha e_1+ \sin \alpha e_2.$ He
obtained the following formula for the quantity $C(z,\rho)$: 
\begin{equation}
\label{eq:8}
C(z,\rho)=\frac{4\omega_{n-2}}{\omega_{n}}\frac{2^{n-1}}{(1-\rho)(1+\rho)^{n-1}}\frac{1}{\sqrt{1+z^2}}\int_{0}^{1}\frac{\mathcal{P}_{\rho}(zt)+\mathcal{P}_{\rho}(-zt)}{\sqrt{(1-t^2)^{4-n}}} dt, 
\end{equation}
where 
$$\mathcal{P}_{\rho}(z)= \int_{0}^{\frac{z+\sqrt{z^2+1-\alpha^2_{\rho}}}{1-\alpha_{\rho}}} \frac{n-\beta_{\rho}+nzw-\beta_{\rho}w^2}{(1+w^2)^{\frac{n}{2}+1}(1+\kappa^2_{\rho}w^2)^{\frac{n}{2}-1}}w^{n-2} dw $$
with $\alpha_{\rho}=\frac{n-2}{n}\rho$, $\beta_{\rho}=\frac{n-(n-2)\rho}{2}$ and
$\kappa_{\rho}=\frac{1-\rho}{1+\rho}.$ Here, $z=\tan \alpha$ and $\omega_n$ is
the area of $\partial \mathbb{B}^n.$
 
In this circumference, generalized Khavinson conjecture is equivalent with the
fact that $\sup\limits_{z>0} C(z,\rho)=C(0,\rho).$
 
Using the formula \eqref{eq:8}, David Kalaj in \cite{Kalaj} prove the conjecture in the unit
ball in $\mathbb{R}^4.$ But, as it can be seen from the definition of
$\mathcal{P}_{\rho}$, this formula for $C(z,\rho)$ suitable for Marković's
considerations, is not the appropriate one for treating the case when $n$ is
odd. This is the main reason why this problem is considered to be hard
especially for $n \in 2\mathbb{N}+1$.

\subsection{Organisation of the paper and results}
We prove a representation formula which we will prove in the second
section:
\begin{theorem}
For every $0\leq\rho<1$ the following integral representation for $C(\rho e_1, l)$ holds:
\begin{multline}
\label{eq:9}
C(\rho e_1,l)=\frac{n}{1-\rho^2}\frac{\Gamma(\frac{n}{2})}{\Gamma(\frac{1}{2})\Gamma(\frac{n-1}{2})}\\
\times \int_{-1}^1 \lvert \tfrac{n-2}n \rho \cos\alpha - x\rvert
\frac{(1-x^2)^{\frac{n-3}2}}{(1+\rho^2-2\rho x \cos\alpha)^{\frac{n}2-1}} {}_2F_1(\tfrac{n-2}{2},\tfrac{n}4;\tfrac{n-1}2;\tfrac{4\rho^2 \sin^2\alpha (1-x^2)}{(1+\rho^2-2\rho x \cos\alpha)^{2}}) dx.
\end{multline}
\end{theorem}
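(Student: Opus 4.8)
The plan is to start from the Poisson integral representation of $u \in h^\infty$ and differentiate under the integral sign. For $u(x) = \int_{\mathbb{S}^{n-1}} P(x,\zeta) u(\zeta)\, d\sigma(\zeta)$ with the Poisson kernel $P(x,\zeta) = \frac{1-|x|^2}{\omega_n |x-\zeta|^n}$, the directional derivative in direction $l$ is $\langle \nabla u(x), l\rangle = \int_{\mathbb{S}^{n-1}} \langle \nabla_x P(x,\zeta), l\rangle\, u(\zeta)\, d\sigma(\zeta)$. Taking the supremum over $u$ with $\sup|u|\le 1$ shows that the best constant is the $L^1$ norm of the kernel: $C(\rho e_1, l) = \int_{\mathbb{S}^{n-1}} \lvert \langle \nabla_x P(x,\zeta), l\rangle\rvert_{x=\rho e_1}\, d\sigma(\zeta)$. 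So the first step is to compute $\langle \nabla_x P(\rho e_1,\zeta), l\rangle$ explicitly; a direct computation gives a factor of the form $\bigl(\text{const}\cdot \langle l, \zeta\rangle - \text{const}\cdot \langle l, e_1\rangle(\cdots)\bigr)$ divided by a power of $|x-\zeta|$, and after specializing $x=\rho e_1$, $l = \cos\alpha\, e_1 + \sin\alpha\, e_2$ one should recognize the combination $\tfrac{n-2}{n}\rho\cos\alpha - x$ emerging once $\zeta$ is written in suitable coordinates.

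The second step is the choice of coordinates on $\mathbb{S}^{n-1}$ adapted to the problem. Write $\zeta = (x, \sqrt{1-x^2}\,\eta)$ with $x = \zeta_1 \in [-1,1]$ and $\eta \in \mathbb{S}^{n-2}$; then $d\sigma(\zeta) = (1-x^2)^{(n-3)/2}\, dx\, d\sigma(\eta)$ up to the normalizing constant $\frac{\Gamma(n/2)}{\Gamma(1/2)\Gamma((n-1)/2)}$ (this is where the Gamma-factor prefactor in \eqref{eq:9} comes from, together with the $\omega_n$ from the Poisson kernel). The denominator $|x-\zeta|^{n}$ becomes $(1+\rho^2 - 2\rho \zeta_1)^{n/2}$, independent of $\eta$, and combines with one power from the gradient to give the $(1+\rho^2-2\rho x\cos\alpha)^{n/2-1}$ — wait, more carefully: after differentiation one gets $|x-\zeta|^{-n-2}$ times a linear-in-$\zeta$ numerator, and the $\eta$-integration of the absolute value of that numerator is the crux.

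The third and main step — and the expected principal obstacle — is the evaluation of the inner integral over $\eta \in \mathbb{S}^{n-2}$ of $\lvert a - b\langle \eta, e_2'\rangle\rvert$ against the measure, where $a$ collects the terms not involving $\eta$ (proportional to $\tfrac{n-2}{n}\rho\cos\alpha - x$ after the dust settles) and $b$ is proportional to $\rho\sin\alpha\sqrt{1-x^2}$. Parametrizing $\langle \eta, e_2'\rangle = s$ reduces this to a one-dimensional integral $\int_{-1}^1 |a - bs|(1-s^2)^{(n-4)/2}\, ds$; since the $a$ term and $\tfrac{n-2}{n}\rho\cos\alpha - x$ do not necessarily keep a fixed sign over the domain, one must handle the absolute value by splitting, but the key identity is that $\int_{-1}^1 |a-bs|(1-s^2)^{(n-4)/2}ds$ can be expressed through a Gauss hypergeometric function of argument $b^2/a^2$ (or rather of the rescaled quantity appearing in \eqref{eq:9}) — this follows from expanding, using the Beta integral $\int_{-1}^1 s^{2k}(1-s^2)^{(n-4)/2}ds = \operatorname{B}(k+\tfrac12, \tfrac{n-2}{2})$, and recognizing the resulting series as ${}_2F_1(\tfrac{n-2}{2}, \tfrac{n}{4}; \tfrac{n-1}{2}; \cdot)$. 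The delicate points are: getting the absolute value $\lvert \tfrac{n-2}{n}\rho\cos\alpha - x\rvert$ to factor cleanly out front (which works because the contribution of the odd part in $s$ integrates against an even weight to produce only even powers, so only $a$ and $b^2$ appear), matching the exact argument $\frac{4\rho^2\sin^2\alpha(1-x^2)}{(1+\rho^2-2\rho x\cos\alpha)^2}$ of the ${}_2F_1$, and verifying the parameters of the hypergeometric function via a standard quadratic transformation. Finally, substituting back and collecting all constants yields \eqref{eq:9}; I would double-check the normalization by testing $\alpha = 0$ against Khavinson's known radial-derivative formula and $\rho = 0$ against the elementary bound \eqref{eq:2}.
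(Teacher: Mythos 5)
Your overall architecture (sharp constant as the $L^1$ norm of $\langle\nabla_x P,l\rangle$, cylindrical coordinates $\zeta=(x,\sqrt{1-x^2}\,\eta)$ on the sphere, Beta integrals assembling into a ${}_2F_1$) is the same as the paper's, but there are two genuine gaps, and the first is fatal to the plan as written. Differentiating $P(x,\zeta)=\frac{1-|x|^2}{|x-\zeta|^n}$ directly and setting $x=\rho e_1$, $l=\cos\alpha\,e_1+\sin\alpha\,e_2$ gives the kernel
\[
\frac{\bigl|\,(n+(4-n)\rho^2)\zeta_1\cos\alpha+n(1-\rho^2)\zeta_2\sin\alpha-\rho\bigl(n+2-(n-2)\rho^2\bigr)\cos\alpha\,\bigr|}{(1+\rho^2-2\rho\zeta_1)^{\frac n2+1}},
\]
whose denominator has exponent $\tfrac n2+1$, not $\tfrac n2-1$, and whose numerator is \emph{not} a multiple of $\langle\zeta,l\rangle-\tfrac{n-2}{n}\rho\cos\alpha$: the coefficients of $\zeta_1\cos\alpha$ and $\zeta_2\sin\alpha$ are $n+(4-n)\rho^2$ and $n(1-\rho^2)$, which agree only for $\rho=0$. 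No choice of coordinates for $\zeta$ can fix this; the combination $\tfrac{n-2}{n}\rho$ and the power $2-n$ appear only after the M\"obius change of variables $\zeta=-T_{\rho e_1}(\eta)$ on $\partial\mathbb{B}^n$, whose Jacobian conspires with the kernel to yield Markovi\'c's formula $C(\rho e_1,l)=\frac{n}{1-\rho^2}\int_{\partial\mathbb{B}^n}|\langle\eta-\tfrac{n-2}{n}\rho e_1,l\rangle|\,|\eta-\rho e_1|^{2-n}\,d\sigma(\eta)$, i.e.\ \eqref{eq:13}. This nontrivial imported step is exactly what your phrase ``one should recognize the combination \dots\ emerging'' glosses over; your own mid-sentence ``wait, more carefully'' about the exponent is the symptom.

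Second, your treatment of the inner integral puts the absolute value in the wrong place. In the correct reduction one first rotates by an orthogonal $A$ with $Al=e_1$, after which the absolute-value factor becomes $|\xi_1-\tfrac{n-2}{n}\rho\cos\alpha|$, a function of the outer variable only, so it factors out of the inner integration trivially; the ${}_2F_1$ then arises from expanding the strictly positive denominator $(1+\rho^2-2\rho x\cos\alpha-2\rho y\sin\alpha)^{1-\frac n2}$ as a binomial series in $y$ and integrating term by term against $(1-x^2-y^2)^{\frac n2-2}$ (Lemma \ref{drugaLemma}). Your plan instead expands $\int_{-1}^1|a-bs|(1-s^2)^{\frac{n-4}{2}}ds$ term by term and asserts that ``only even powers appear, so $|a|$ factors out front''; this is false whenever $a-bs$ changes sign on $[-1,1]$ (take $a=0$: the integral is positive while your factored form vanishes), and that is precisely the regime where the absolute value matters. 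A structural hint that the series must come from the denominator, not the numerator, is that the argument $\frac{4\rho^2\sin^2\alpha(1-x^2)}{(1+\rho^2-2\rho x\cos\alpha)^2}$ of the ${}_2F_1$ in \eqref{eq:9} does not involve $\tfrac{n-2}{n}\rho\cos\alpha-x$ at all. (Minor point: the first parameter of the ${}_2F_1$ should be $\tfrac{n-2}{4}$, as in Lemmas \ref{trecaLema}--\ref{petaLema} and as the duplication-formula step actually produces; the $\tfrac{n-2}{2}$ printed in the theorem is a typo that your proposal inherits.)
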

This formula enabled us to reduce Khavinson problem on finding the
maximum of some function $C(\alpha)$. In fact, since it seems to be very hard to
do that directly, we find the majorant $\tilde{C}(\alpha)$ which satisfies
$C(\alpha)\leq \tilde{C}(\alpha)$ and $C(0)=\tilde{C}(0).$ In order to do this
more effectively, we prove some unexpected integral identities in the third
section, thus obtaining the appropriate majorant in the fourth section. These
identities include hypergeometric functions and seems to have independent
interest. For general information on these functions, see
\cite{AndrewsAskeyRoy}. The last section is devoted to the detailed analysis of
$\tilde{C}(\alpha),$ and the final proof of our main theorem(Conjecture 1.1 in $\mathbb{R}^3$):

\begin{theorem}
If $u$ is a bounded harmonic function in the unit ball $\mathbb{B}^3,$ then we have the following sharp inequality:
 \begin{equation}
 \label{eq:10}
 |\nabla u(x)|\leq \frac{1}{\rho^2}\bigg({\frac{(1+\frac{1}{3}\rho^2)^{\frac{3}{2}}}{1-\rho^2}-1}\bigg) \sup_{|y|<1} |u(y)|, 
 \end{equation}
with $\rho=|x|.$  
\end{theorem}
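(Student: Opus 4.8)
The plan is to specialize the representation \eqref{eq:9} to $n=3$, evaluate it in closed form along the radial direction, and then show that no other direction gives a larger constant.

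\emph{The $n=3$ formula and the reduction.} Putting $n=3$ in \eqref{eq:9}, using $\Gamma(\tfrac32)=\tfrac12\sqrt\pi$, $\Gamma(\tfrac12)=\sqrt\pi$ and $\Gamma(1)=1$, and taking $l=l_\alpha=\cos\alpha\,e_1+\sin\alpha\,e_2$, we obtain
\begin{equation*}
C(\alpha):=C(\rho e_1,l_\alpha)=\frac{3}{2(1-\rho^2)}\int_{-1}^{1}\frac{\abs*{\tfrac{\rho\cos\alpha}{3}-x}}{\sqrt{1+\rho^2-2\rho x\cos\alpha}}\;\pFq{2}{1}{\tfrac12,\tfrac34;1;w(x,\alpha)}\,dx,
\end{equation*}
where $w(x,\alpha)=\dfrac{4\rho^2\sin^2\alpha\,(1-x^2)}{(1+\rho^2-2\rho x\cos\alpha)^2}$ lies in $[0,1)$ for $\rho<1$. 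By Marković's reduction recalled above, $C(\rho e_1)=\sup_{0\le\alpha\le\pi/2}C(\alpha)$, so Conjecture 1.1 in $\setR^3$ is equivalent to the assertion $\sup_{\alpha}C(\alpha)=C(0)$.

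\emph{The radial constant.} For $\alpha=0$ one has $w\equiv0$ and $\pFq{2}{1}{\tfrac12,\tfrac34;1;0}=1$, so $C(0)=\tfrac{3}{2(1-\rho^2)}\int_{-1}^{1}\abs*{\tfrac{\rho}{3}-x}\,(1+\rho^2-2\rho x)^{-1/2}\,dx$. The substitution $s=1+\rho^2-2\rho x$ turns this into $\tfrac{3}{8\rho^2(1-\rho^2)}\int_{(1-\rho)^2}^{(1+\rho)^2}\abs*{s-(1+\tfrac13\rho^2)}\,s^{-1/2}\,ds$; since $1+\tfrac13\rho^2$ lies between $(1-\rho)^2$ and $(1+\rho)^2$, splitting the range there and integrating the two elementary pieces gives
\begin{equation*}
C(0)=\frac{1}{\rho^2}\paren*{\frac{(1+\tfrac13\rho^2)^{3/2}}{1-\rho^2}-1},
\end{equation*}
which is the right-hand side of \eqref{eq:10}. (As $\rho\to0$ this tends to $\tfrac32$, matching \eqref{eq:2}; and $C(0)=C(\rho e_1,n_{\rho e_1})$ is Khavinson's sharp radial constant, so \eqref{eq:10} will be sharp once proved.)

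\emph{Majorant and monotonicity.} Proving $C(\alpha)\le C(0)$ directly looks hopeless because of the coupling of $\abs*{\tfrac13\rho\cos\alpha-x}$, the Poisson-type denominator, and the hypergeometric factor. Instead I would use the integral identities of the third section to pass to a majorant $\tilde C(\alpha)$ with $C(\alpha)\le\tilde C(\alpha)$ on $[0,\tfrac\pi2]$ and $\tilde C(0)=C(0)$: the role of those identities is to let one integrate the hypergeometric series term by term against the remaining $x$-weight and re-sum the result into a closed form, or a single one-parameter integral, in $\alpha$; the slack one may introduce away from $\alpha=0$ --- where the whole hypergeometric contribution disappears --- is what makes a manageable $\tilde C$ attainable. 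It then remains to show $\tilde C(\alpha)\le\tilde C(0)$ on $[0,\tfrac\pi2]$, which I would do by showing $\tilde C$ is non-increasing: differentiate under the integral sign and establish $\tilde C'(\alpha)\le0$ by a careful sign analysis, splitting the range at the zero of the linear factor and estimating $\pFq{2}{1}{\tfrac12,\tfrac34;1;w}$, $\partial_\alpha w$, and the derivative of the hypergeometric factor against one another. Chaining the bounds, $C(\alpha)\le\tilde C(\alpha)\le\tilde C(0)=C(0)$, whence $C(\rho e_1)=C(0)$; this is Conjecture 1.1 in $\setR^3$, and inserting the value from the previous step gives \eqref{eq:10} with the stated sharp constant.

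The main obstacle is this last step: pinning down hypergeometric identities that make $\tilde C$ simultaneously tight at $\alpha=0$ and simple enough to analyze, and then carrying out the monotonicity of $\tilde C$. A crude estimate of $\pFq{2}{1}{\tfrac12,\tfrac34;1;w}$ will not do --- this factor is always $\ge1$ and, near $\alpha=\pi/2$ with $\rho\to1$, blows up like $(1-w)^{-1/4}$ (its parameters satisfy $c-a-b=-\tfrac14$) --- so the identities must reflect the true size of the hypergeometric contribution while still leaving $\tilde C(\alpha)$ amenable to an explicit derivative computation.
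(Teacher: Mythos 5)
Your reduction to $\sup_{\alpha}C(\alpha)=C(0)$ and your closed-form evaluation of the radial constant are correct and agree with the paper: the substitution $s=1+\rho^2-2\rho x$ does yield $C(0)=\frac{1}{\rho^2}\big(\frac{(1+\frac13\rho^2)^{3/2}}{1-\rho^2}-1\big)$. However, the heart of the proof --- the actual construction of the majorant $\tilde C$ and the verification that it is maximized at $\alpha=0$ --- is absent from your proposal: you state the plan and explicitly defer the ``main obstacle,'' which is precisely the content of Sections 4 and 5 of the paper, so what you have is not yet a proof. Concretely, the paper writes ${}_2F_1=1+({}_2F_1-1)$, keeps the first part exactly (this is $S(\alpha)$, elementary), and applies Cauchy--Schwarz to the second part with the two weights $1$ and $\big(\tfrac13\rho\cos\alpha-x\big)^2$, chosen exactly because Lemmas 3 and 4 evaluate those two moments of the hypergeometric kernel in closed form (and Lemma 5 kills the cross term); this gives $C(\alpha)\le\frac{3}{2(1-\rho^2)}\big(S+\sqrt{S_1S_2}\big)$ with equality at $\alpha=0$ since $S_1(0)=S_2(0)=0$. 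A further step $\sqrt{S_1S_2}\le\frac13 S_1+\frac34 S_2$ linearizes the bound while preserving tightness at $\alpha=0$.

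Your proposed final step (differentiate $\tilde C$ and do a direct sign analysis of $\tilde C'(\alpha)$) is also not what the paper does and would likely be intractable: the sign of the derivative is not accessible by crude comparison of the factors. Instead the paper expands $S+\frac13 S_1+\frac34 S_2$ as a power series $T(t)=c_0+\sum_{k\ge1}a_k(\rho)t^k$ in $t=\cos^2\alpha$, proves $a_k(\rho)<0$ for all $k\ge2$ (so $T''\le0$) and separately that $T_{-}'(1)\ge0$; concavity then forces $T'\ge0$ on $[0,1]$ and hence the maximum at $t=1$, i.e.\ $\alpha=0$. Without the specific choice of Cauchy--Schwarz weights and this concavity argument, the gap between your outline and a proof is essentially the entire difficulty of the problem. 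A minor side point: for $n=3$ the hypergeometric parameters coming from the derivation are $\frac{n-2}{4}=\frac14$ and $\frac34$ (the ``$\frac{n-2}{2}$'' in \eqref{eq:9} is a typo, corrected in the later lemmas), so $c-a-b=0$ and the singularity of the hypergeometric factor at $w=1$ is logarithmic, not of order $(1-w)^{-1/4}$ as you assert.
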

 
\section{A general representation formula for the sharp constant}
 
In \cite{Markovic1} Marković gives some general observations about the problem
in $\mathbb{B}^n.$ We start from his conclusion that it is enough to prove the
conjecture for $x \in \mathbb{B}^n$ of the form $x=\rho e_1$ and directions
given by $l=l_{\alpha}=e_1 \cos \alpha + e_2 \sin \alpha.$

We start from the formula for the optimal constant in the inequality
\begin{equation}
\label{eq:11}
  \lvert\langle \nabla u(\rho e_1),l\rangle\rvert \leqslant C(\rho e_1,l)\sup_{|y|<1}|u(y)|,
\end{equation}
for $x=\rho e_1$ and the direction $l\in \partial \mathbb{B}^n$ given by:

\begin{equation}
\label{eq:12}
C(\rho e_1,l)=\int_{\partial \mathbb{B}^n} \lvert \langle \nabla P(x,\zeta), l \rangle\rvert 
d\sigma(\zeta), 
\end{equation}

where $P(x,\zeta)=\frac{1-|x|^2}{|x-\zeta|^n}$ is the Poisson kernel for the unit ball $\mathbb{B}^n.$

Mobius transform 
 
$$\zeta=-T_{x}(\eta)=\frac{(1-|x|^2)(\eta-x)-|\eta-x|^2x}{[\eta,x]^2}$$

where 

$$[x,y]=|y||y^{*}-x|,  \quad y^{*}=\frac{y}{|y|^2},  $$

together with some calculations (see \cite{Markovic1} for details) gives us:

\begin{equation} 
\label{eq:13}
C(\rho e_1, l) = \frac{n}{1-\rho^2} \int_{\partial \mathbb{B}^n} \lvert \langle
\eta - \alpha_{\rho} e_1,l \rangle\rvert \lvert \eta-\rho e_1 \rvert^{2-n}
d\sigma(\eta)
\end{equation}
where
$$\alpha_\rho = \frac{n-2}{n}\rho,\quad l=e_1 \cos \alpha + e_2 \sin \alpha, \quad \alpha \in [0,\tfrac{\pi}2].$$
 
So, the problem is, in fact, two-dimensional. We see this, since for fixed $l\in
\partial \mathbb{B}^n$ there exists an orthogonal matrix $A$ such that
\[
Al=e_1,\quad Ae_1 = \cos \alpha e_1 + \sin \alpha e_2,\quad \alpha \in [0,\tfrac{\pi}2].
\]
Then we have:
\begin{align*}
\frac{1-\rho^2}{n} C(\rho e_1,l)
&=\int_{\partial \mathbb{B}^n} \lvert \langle A\eta-\alpha_\rho Ae_1,Al\rangle\rvert \lvert A\eta - \rho Ae_1 \rvert^{2-n} d\sigma(\eta)\\
&= \int_{\partial \mathbb{B}^n} \lvert \langle \xi-\alpha_\rho Ae_1,Al\rangle\rvert \lvert \xi - \rho Ae_1 \rvert^{2-n} d\sigma(\xi)\\
&= \int_{\partial \mathbb{B}^n} \lvert \langle \xi-\alpha_\rho Ae_1,e_1\rangle\rvert \lvert \xi - \rho Ae_1 \rvert^{2-n} d\sigma(\xi)\\
&= \int_{\partial \mathbb{B}^n} \lvert \xi_1-\tfrac{n-2}{n}\rho \cos \alpha\rvert
\lvert\xi - \rho(\cos \alpha e_1 + \sin\alpha e_1)\rvert^{2-n}  d\sigma(\xi).\\
\intertext{Since}
\lvert\xi - \rho(\cos \alpha e_1 &+ \sin\alpha e_2)\rvert^{2-n} \\
&= \langle\xi-\rho\cos\alpha e_1 - \rho \sin \alpha e_2,
\xi-\rho\cos\alpha e_1 - \rho\sin\alpha e_2 \rangle^{1-\frac{n}2} \\
&= (1-2\rho \xi_1 \cos\alpha - 2\rho \xi_2 \sin\alpha + \rho^2)^{1-\frac{n}2}, \\
\end{align*}
we have:
\begin{multline}
\label{eq:14}
\frac{1-\rho^2}{n} C(\rho e_1,l) \\
= \int_{\partial \mathbb{B}^n}
\lvert \xi_1 - \tfrac{n-2}n \rho \cos\alpha\rvert
\lvert 1-2\rho \xi_1 \cos\alpha - 2\rho \xi_2 \sin\alpha + \rho^2 \rvert^{1-\frac{n}2} d\sigma(\xi).  
\end{multline}  

Now, the Khavinson conjecture is equivalent with the fact that, for fixed $\rho
\in [0,1)$ the maximum of the last integral as a function on $\alpha$ is
attained at $\alpha=0.$

To expand the integral in \eqref{eq:14}, let us note that the integrand depends
only on $\xi_1$ and $\xi_2.$ To do the expansion, we prove the following lemma
which gives us the formula for integrals over the sphere $\partial \mathbb{B}^n$ of
functions which depends on $k$ variables. It is a real counterpart of the Lemma
from Rudin's book \cite{Rudin}.

\begin{lemma}\label{prvaLema}
  Let $f$ be a continuous function on the closed ball $\overline{\mathbb{B}^k}$
  which depends on first $k$ variables. If $P$ is projection on $\mathbb{R}^k$,
  we have:
  \begin{equation}
  \label{eq:15}
    \int_{\partial \mathbb{B}^n} (f \circ P)d\sigma =
    \frac{\Gamma(\frac{n}2)}{\Gamma(\frac{k}2+1)\Gamma(\frac{n-k}2)}
    \int_{\mathbb{B}^k} (1-\lvert x \rvert^2)^{\frac{n-k-2}2} f(x)dv_k(x),
  \end{equation}
  where $\sigma$ is normalized area measure and $v_k$ normalised Lebesgue volume
  measure.
\end{lemma}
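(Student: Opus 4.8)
The plan is to realize the sphere integral as a weighted ball integral by slicing $\partial\mathbb{B}^n$ along the fibres of the projection $P$, which is the direct real analogue of Rudin's lemma. I would write a point of $\partial\mathbb{B}^n$ as $\xi=(x,w)$ with $x\in\mathbb{R}^k$, $w\in\mathbb{R}^{n-k}$ and $|x|^2+|w|^2=1$. For $|x|<1$ the fibre $P^{-1}(x)\cap\partial\mathbb{B}^n$ is the sphere of radius $\sqrt{1-|x|^2}$ in the second factor, so off the negligible set $\{|x|=1\}$ I parametrize the sphere by
$$\Phi:\mathbb{B}^k\times\mathbb{S}^{n-k-1}\to\partial\mathbb{B}^n,\qquad \Phi(x,\omega)=\bigl(x,\sqrt{1-|x|^2}\,\omega\bigr).$$
Setting $s=\sqrt{1-|x|^2}$, the parameter derivatives are $\partial_{x_i}\Phi=(e_i,-\tfrac{x_i}{s}\omega)$ and, for $\tau$ tangent to $\mathbb{S}^{n-k-1}$ at $\omega$ (so $\tau\perp\omega$), $\partial_\tau\Phi=(0,s\tau)$.

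The main computational step is the surface area element. I would form the Gram matrix $G$ of these tangent vectors; since $\langle\partial_{x_i}\Phi,\partial_\tau\Phi\rangle=-x_i\langle\omega,\tau\rangle=0$, it is block diagonal, with blocks $I_k+s^{-2}xx^{T}$ and $s^2 I_{n-k-1}$. Using the rank-one determinant identity $\det(I_k+s^{-2}xx^{T})=1+s^{-2}|x|^2=s^{-2}$, which holds because $|x|^2+s^2=1$, the Jacobian factor is
$$\sqrt{\det G}=\sqrt{s^{-2}\cdot s^{2(n-k-1)}}=s^{\,n-k-2}=(1-|x|^2)^{\frac{n-k-2}2}.$$
Hence $\Phi^{*}\,d\mathcal{H}^{n-1}=(1-|x|^2)^{\frac{n-k-2}2}\,dx\,d\mathcal{H}^{n-k-1}(\omega)$, and since $f\circ P$ is independent of $\omega$, integrating the fibre out contributes the full area $\omega_{n-k}=\tfrac{2\pi^{(n-k)/2}}{\Gamma((n-k)/2)}$ of $\mathbb{S}^{n-k-1}$, giving
$$\int_{\partial\mathbb{B}^n}(f\circ P)\,d\mathcal{H}^{n-1}=\omega_{n-k}\int_{\mathbb{B}^k}(1-|x|^2)^{\frac{n-k-2}2}f(x)\,dx.$$

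Finally I would pass to the normalized measures and match constants. With $d\sigma=\omega_n^{-1}\,d\mathcal{H}^{n-1}$, $\omega_n=\tfrac{2\pi^{n/2}}{\Gamma(n/2)}$, and $dx=\tfrac{\pi^{k/2}}{\Gamma(k/2+1)}\,dv_k$, the prefactor becomes
$$\frac{\omega_{n-k}}{\omega_n}\cdot\frac{\pi^{k/2}}{\Gamma(\tfrac k2+1)}=\frac{\Gamma(\tfrac n2)}{\Gamma(\tfrac k2+1)\,\Gamma(\tfrac{n-k}2)}$$
once the powers of $\pi$ cancel, which is exactly the claimed constant. The only points needing care are the degeneracy of $\Phi$ on $\{|x|=1\}$, which is harmless since that set has $\mathcal{H}^{n-1}$-measure zero and the weight $(1-|x|^2)^{\frac{n-k-2}2}$ is integrable on $\mathbb{B}^k$ for all $k<n$, and the low fibre-dimension case $n-k=1$, where $\mathbb{S}^0=\{\pm1\}$ and $\omega_1=2$ turn the fibre integral into a two-point sum, still consistent with the same identity. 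The main obstacle is therefore the clean evaluation of the Gram determinant via the rank-one update; the remainder is bookkeeping with Gamma factors.
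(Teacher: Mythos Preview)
Your proof is correct and takes a genuinely different route from the paper's. The paper proceeds indirectly: it sets $I(r)=\int_{r\mathbb{B}^n}(f\circ P)\,dv_n$ and computes $I'(1)$ in two ways---once via polar coordinates, which produces $n\int_{\partial\mathbb{B}^n}(f\circ P)\,d\sigma$, and once by first integrating out the $n-k$ free variables, which gives $c_{n,k}\int_{\mathbb{B}^k}(r^2-|x|^2)^{(n-k)/2}f(x)\,dv_k$ and hence, after differentiating in $r$, the weight $(1-|x|^2)^{(n-k-2)/2}$. The constant is then determined separately by plugging in $f\equiv 1$ and reducing to a Beta integral. Your approach is direct and geometric: an explicit parametrization of $\partial\mathbb{B}^n$ over $\mathbb{B}^k\times\mathbb{S}^{n-k-1}$, together with the rank-one Gram determinant identity $\det(I_k+s^{-2}xx^{T})=s^{-2}$, produces both the weight and the exact constant in one stroke, with no auxiliary normalization step. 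The paper's argument sidesteps the metric computation at the cost of the differentiation trick and the extra Beta-function calculation; your argument is more self-contained and makes the geometric origin of the weight transparent, while also handling the edge case $n-k=1$ cleanly.
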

\begin{proof}
  Let us take $f\in C(\mathbb{B}^k),$ $\operatorname{supp} f\subset r_0 B^k,$
  for some $r_0 <1.$ Then, we define:
  \begin{align*}
  I(r) &= \int_{r \mathbb{B}^n} (f \circ P)dv_n = n \int_0^r t^{n-1}
    \biggl(\int_{\mathbb{S}} (f\circ P)(t\zeta)d\sigma(\zeta)\biggr)dt, \\
  \intertext{from which, after differentiating, we get:}
  I'(1) &= n \int_{\mathbb{S}} (f\circ P)(\zeta)d\sigma(\zeta).\\
  \intertext{On the other side, integrating over the $n-k$ free variables gives us:}
  I(r) &=c_{n,k} \int_{\mathbb{B}^k} (r^2-\lvert x\rvert^2)^{\frac{n-k}2} f(x)
    dv_k(x),\\
  \intertext{so differentiation implies:}
  I'(r) &=  \frac{(n-k)c_{n,k}}2 \cdot 2r \int_{\mathbb{B}^k} (r^2-\lvert x
    \rvert^2)^{\frac{n-k-2}2} f(x)dv_k(x) \\
  \intertext{and hence:}
  I'(1) &= \widetilde{c}_{n,k} \int_{\mathbb{B}^k}
(1-\lvert x \rvert^2)^{\frac{n-k-2}2} f(x)dv_k(x).
  \end{align*}

To find the exact value of the constant that in the statement of our Lemma, let us set $f(x)=1$:
\begin{align*}
1&= \int_{\mathbb{S}} (f \circ P) d\sigma = C_{n,k} \int_{\mathbb{B}^k}
(1-\lvert x \rvert^2)^{\frac{n-k-2}2} dv_k(x)\\
&=k C_{n,k} \int_0^1 t^{k-1} \biggl(\int_{S^k} (1-\lvert t\zeta\rvert^2)^{\frac{n-k-2}2} d\sigma(\zeta)\biggr)dt\\
&=k C_{n,k} \int_0^1 t^{k-1} (1-t^2)^{\frac{n-k}2-1} dt\\
&= \frac{kC_{n,k}}2  \int_0^1 (t^2)^{\frac{k}2-1}
(1-t^2)^{\frac{n-k}2-1} d(t^2)\\
&=\frac{kC_{n,k}}2  \operatorname{B}(\tfrac{k}2,\tfrac{n-k}2)
 = \frac{kC_{n,k}}{2}  \frac{\Gamma(\frac{k}{2})\Gamma(\frac{n-k}2)}{\Gamma(\frac{n}2)},
\end{align*}
therefore:
\begin{equation*}
C_{n,k} = \frac{\Gamma(\frac{n}2)}{\Gamma(\frac{k}{2}+1)\Gamma(\frac{n-k}{2})}.
\qedhere
\end{equation*}
\end{proof}

Applying our Lemma \ref{prvaLema}, we get:
\begin{align*}
&\frac{1-\rho^2}{n} C(\rho e_1,l) =\\
&=\frac{\Gamma(\frac{n}2)}{\pi \Gamma(\frac{n}2-1)\Gamma(2)}
\int_{\mathbb{B}^2} \lvert \tfrac{n-2}{n} \rho \cos \alpha - x\rvert
    \frac{(1-x^2-y^2)^{\frac{n}2-2}}{(1-2\rho x \cos \alpha - 2 \rho y \sin\alpha + \rho^2)^{\frac{n}2-1}} dxdy  \\
&= \frac{n-2}{2\pi}  \int_{-1}^1 \lvert \tfrac{n-2}n \rho \cos\alpha -x \rvert
\biggl(\int_{-\sqrt{1-x^2}}^{\sqrt{1-x^2}} \frac{(1-x^2-y^2)^{\frac{n}2-2}}{(1-2\rho x \cos \alpha - 2 \rho y \sin\alpha + \rho^2)^{\frac{n}2-1}} dy\biggr)dx.
\end{align*}

Calculation of the inner integral will be done in the next lemma. We will invoke
it at the appropriate places in the proof.
\begin{lemma}\label{drugaLemma} 
  For $\rho \in [0,1]$ and $\alpha \in [0,\frac{\pi}{2}] $ there holds the
  following identity:	
\begin{multline}
\label{eq:16}
\int_{-\sqrt{1-x^2}}^{\sqrt{1-x^2}} \frac{(1-x^2-y^2)^{\frac{n}2-2}}{(1-2\rho x
  \cos \alpha - 2 \rho y \sin\alpha + \rho^2)^{\frac{n}2-1}} dy \\
=\operatorname{B}\big(\frac12,
\frac{n}2-1\big) \frac{(1-x^2)^{\frac{n-3}2}}{(1+\rho^2-2\rho x \cos\alpha)^{\frac{n}2-1}} {}_2F_1(\tfrac{n-2}{2},\tfrac{n}4;\tfrac{n-1}2;\tfrac{4\rho^2 \sin^2\alpha (1-x^2)}{(1+\rho^2-2\rho x \cos\alpha)^{2}})
\end{multline}
\end{lemma}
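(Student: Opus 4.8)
The plan is to collapse the two-sided integral in $y$ to a one-variable integral of Euler type and then identify it with a Gauss hypergeometric function.

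First I would homogenise the integrand. Fix $x\in(-1,1)$ and write $R:=\sqrt{1-x^2}$, $A:=1+\rho^2-2\rho x\cos\alpha$ and $B:=2\rho\sin\alpha$, so that $1-x^2-y^2=R^2-y^2$ and the denominator of the integrand is $(A-By)^{\frac n2-1}$. The substitution $y=Rs$, $s\in[-1,1]$, gives $1-x^2-y^2=R^2(1-s^2)$, and after factoring $A$ out of the denominator it turns the left-hand side of \eqref{eq:16} into
\[
R^{\,n-3}A^{1-\frac n2}\int_{-1}^{1}\frac{(1-s^2)^{\frac n2-2}}{(1-\beta s)^{\frac n2-1}}\,ds,\qquad
\beta:=\frac{BR}{A}=\frac{2\rho\sin\alpha\,\sqrt{1-x^2}}{1+\rho^2-2\rho x\cos\alpha}.
\]
Here $R^{\,n-3}=(1-x^2)^{\frac{n-3}2}$ and $A^{1-\frac n2}=(1+\rho^2-2\rho x\cos\alpha)^{1-\frac n2}$ are exactly the algebraic prefactors on the right of \eqref{eq:16}, and $\beta^2$ is exactly the argument of the hypergeometric function appearing there. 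For $\rho\in[0,1)$ and $\alpha\in[0,\tfrac\pi2]$ one has $0\le\beta<1$, and the exponent $\tfrac n2-2$ exceeds $-1$ whenever $n>2$, so the one-variable integral converges absolutely and no analytic subtlety is hidden.

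It then remains to evaluate $J:=\int_{-1}^{1}(1-s^2)^{\frac n2-2}(1-\beta s)^{1-\frac n2}\,ds$. Since $(1-s^2)^{\frac n2-2}$ is even and $(1-\beta s)^{1-\frac n2}+(1+\beta s)^{1-\frac n2}=2\sum_{m\ge0}\frac{(\frac n2-1)_{2m}}{(2m)!}(\beta s)^{2m}$ (uniformly convergent on $[0,1]$ because $\beta<1$), one integrates term by term using $\int_0^1(1-s^2)^{\frac n2-2}s^{2m}\,ds=\tfrac12\operatorname{B}(m+\tfrac12,\tfrac n2-1)$. Inserting the Legendre duplication identities $(\tfrac n2-1)_{2m}=4^m(\tfrac n4-\tfrac12)_m(\tfrac n4)_m$ and $(2m)!=4^m m!\,(\tfrac12)_m$, together with $\operatorname{B}(m+\tfrac12,\tfrac n2-1)=\operatorname{B}(\tfrac12,\tfrac n2-1)\,\frac{(\frac12)_m}{(\frac{n-1}2)_m}$, the factors $(\tfrac12)_m$ cancel and the series collapses to $\operatorname{B}(\tfrac12,\tfrac n2-1)$ times the Gauss function ${}_2F_1(\tfrac n4-\tfrac12,\tfrac n4;\tfrac{n-1}2;\beta^2)$ appearing on the right of \eqref{eq:16}. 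Equivalently, after the affine change $t=\tfrac{1+s}2$ one may match $J$ directly to the Euler representation ${}_2F_1(a,b;c;z)=\frac{\Gamma(c)}{\Gamma(b)\Gamma(c-b)}\int_0^1 t^{b-1}(1-t)^{c-b-1}(1-zt)^{-a}\,dt$ — the exponents forcing $b=c-b=\tfrac n2-1$ and $a=\tfrac n2-1$, with $z$ chosen so that $\tfrac{z}{2-z}=\beta$ — and then apply the quadratic transformation ${}_2F_1(a,b;2b;z)=(1-\tfrac{z}{2})^{-a}\,{}_2F_1\!\big(\tfrac a2,\tfrac{a+1}2;b+\tfrac12;(\tfrac{z}{2-z})^2\big)$, whose specialisation to the case ``lower parameter $=$ twice an upper parameter'' is precisely the situation at hand and which carries the argument to $\beta^2$. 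Substituting the resulting value of $J$ back into the product displayed above yields the right-hand side of \eqref{eq:16}.

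There is no essential obstacle here: the whole lemma is a computation. The only point demanding care is the bookkeeping of the normalising constants — checking that the Gamma-factors surviving the cancellation really assemble into $\operatorname{B}(\tfrac12,\tfrac n2-1)$ and, in the second route, selecting the quadratic transformation in exactly the right specialisation, since that is what pins down the upper hypergeometric parameters.
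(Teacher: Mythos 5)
Your main argument --- substituting $y=\sqrt{1-x^2}\,s$, expanding $(1-\beta s)^{1-\frac n2}$ as a binomial series, integrating the surviving even powers against $(1-s^2)^{\frac n2-2}$ via Beta integrals, and reassembling the coefficients with the duplication formula --- is exactly the paper's own proof, and your bookkeeping (including the optional Euler-integral/quadratic-transformation route) is correct. One remark: both your computation and the paper's proof produce the first parameter $\tfrac n4-\tfrac12=\tfrac{n-2}4$, not the $\tfrac{n-2}2$ printed in the statement of the lemma; that is a typo in the statement (the subsequent lemmas and the $n=3$ computations all use $\tfrac{n-2}4$), so you have proved the intended identity.
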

\begin{proof}
We change variable by $y=\sqrt{1-x^2}t$:
\begin{align*}
\int_{-\sqrt{1-x^2}}^{\sqrt{1-x^2}} &\frac{(1-x^2-y^2)^{\frac{n}2-2}}{(1-2\rho x \cos \alpha - 2 \rho y \sin\alpha + \rho^2)^{\frac{n}2-1}} dy \\
&= \int_{-1}^1 \frac{(1-x^2)^{\frac{n}2-2}(1-t^2)^{\frac{n}2-2}\sqrt{1-x^2}}{(1-2\rho x \cos\alpha - 2\rho \sqrt{1-x^2} t \sin \alpha + \rho^2)^{\frac{n}2-1}} dt \\
&= (1-x^2)^{\frac{n-3}2} (1+\rho^2-2\rho x \cos\alpha)^{1-\frac{n}2} \int_{-1}^1 \frac{(1-t^2)^{\frac{n}2-2}}{\Bigl(1-\frac{2\rho t \sqrt{1-x^2} \sin\alpha}{1+\rho^2-2\rho x \cos\alpha}\Bigr)^{\frac{n}2-1}}dt. 
\end{align*}
Using power series expansion
\[ (1-z)^{-\alpha}=\sum_{k=0}^{+\infty} \binom{k+\alpha-1}{k}z^k, \quad
  \text{for } \alpha=\frac{n}{2}-1 \text{ and } z=\frac{2\rho  t \sqrt{1-x^2}
    \sin\alpha}{1+\rho^2-2\rho x \cos\alpha}, \]
we get:
\begin{align*}
\int_{-1}^1 &\frac{(1-t^2)^{\frac{n}{2}-2}}{\Bigl(1-\frac{2\rho t \sqrt{1-x^2} \sin\alpha}{1+\rho^2-2\rho x \cos\alpha}\Bigr)^{\frac{n}{2}-2}}dt \\
&=  \sum_{k=0}^\infty \binom{k+\frac{n}{2}-1-1}{k} \biggl(\frac{2\rho  \sqrt{1-x^2} \sin\alpha}{1+\rho^2-2\rho x \cos\alpha}\biggr)^{\!k} \int_{-1}^1 (1-t^2)^{\frac{n}{2}-2} t^k dt \\
&= 2\sum_{k=0}^\infty \binom{2k+\frac{n}{2}-2}{2k} \frac{4^k \rho^{2k} (1-x^2)^k \sin^{2k}\alpha}{(1+\rho^2-2\rho x \cos\alpha)^{2k}} \int_{0}^1 t^{2k}(1-t^2)^{\frac{n}{2}-2}  dt.
\end{align*}
We easily find that
\begin{align*}
2\int_{0}^1 t^{2k}(1-t^2)^{\frac{n}2-2}  dt &= \int_0^1 u^{k-\frac12}(1-u)^{\frac{n}2-2}du \\
& =\operatorname{B}\big(k+\frac12,\frac{n}2-1\big)
  = \frac{\Gamma(k+\frac12)\Gamma(\frac{n}2-1)}{\Gamma(k+\frac{n-1}2)}.
\end{align*}
Since $\binom{2k+\frac{n}2-2}{2k} = \frac{(\frac{n}2-1)_{2k}}{(2k)!},$ by duplication formula for Gamma function, we have
\begin{align*}
\binom{2k+\frac{n}2-2}{2k} \frac{\Gamma(k+\frac12)\Gamma(\frac{n}2-1)}{\Gamma(k+\frac{n-1}2)}
&= \frac{(\frac{n}2-1)_{2k}\Gamma(k+\frac12)\Gamma(\frac{n}2-1)}
{\Gamma(2k+1)\Gamma(\frac{n-1}2)\bigl(\frac{n-1}2\bigr)_k} \\
&=\operatorname{B}(\tfrac12, \tfrac{n}2-1) \frac{\bigl(\frac{n-2}4\bigr)_k\bigl(\frac{n}4\bigr)_k}{\bigl(\frac{n-1}2\bigr)_k k!}. \qedhere
\end{align*}
\end{proof}

Using now Lemma \ref{drugaLemma} i.e. \eqref{eq:16}, 
we get:
\begin{multline*}
\frac{1-\rho^2}{n}C(\rho e_1,l) = \frac{n-2}{2\pi} \operatorname{B}\big(\frac12,
\frac{n}2-1\big) \times \\
\times \int_{-1}^1 \lvert \tfrac{n-2}n \rho \cos\alpha - x\rvert
\frac{(1-x^2)^{\frac{n-3}2}}{(1+\rho^2-2\rho x \cos\alpha)^{\frac{n}2-1}} {}_2F_1(\tfrac{n-2}{2},\tfrac{n}4;\tfrac{n-1}2;\tfrac{4\rho^2 \sin^2\alpha (1-x^2)}{(1+\rho^2-2\rho x \cos\alpha)^{2}}) dx,
\end{multline*}
i.e. our Theorem 1.

\section{Three important integral identities}

Before we can come to the main estimate, we need three integral identities. We
derive the first two of them from Lemma \ref{prvaLema} using it for some special
choices of the function $f.$ Identities are given by the next lemmata.

\begin{lemma}\label{trecaLema}
  We have the following equality for all $\rho \in [0,1]$ and $\alpha \in
  [0,\frac{\pi}{2}]:$
\begin{multline}
\label{eq:17}
\int_{-1}^1 \frac{(1-x^2)^{\frac{n-3}2}}{(1+\rho^2-2\rho x \cos\alpha)^{\frac{n}2-1}}  {}_2 F_1\bigl(\tfrac{n-2}4, \tfrac{n}4;
\tfrac{n-1}2; \tfrac{4\rho^2 \sin^2\alpha (1-x^2)}{(1+\rho^2-2\rho x \cos\alpha)^2}\bigr) dx\\
=
\int_{-1}^1 (1-x^2)^{\frac{n-3}2} (1-2\rho x + \rho^2)^{1-\frac{n}2} dx.
\end{multline}
\end{lemma}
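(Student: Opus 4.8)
The plan is to exploit the rotational symmetry that is implicit in Lemma~\ref{drugaLemma}. For $\rho\in[0,1)$ and $\alpha\in[0,\tfrac\pi2]$ put
\[
 I(\alpha)=\int_{\mathbb{B}^2}(1-x^2-y^2)^{\frac n2-2}\bigl(1-2\rho x\cos\alpha-2\rho y\sin\alpha+\rho^2\bigr)^{1-\frac n2}\,dx\,dy .
\]
Writing $\mathbb{B}^2=\{(x,y):x\in[-1,1],\ \abs{y}\le\sqrt{1-x^2}\}$, integrating first in $y$ (the integrand is nonnegative, so Tonelli applies), and evaluating the inner integral by Lemma~\ref{drugaLemma}, one sees at once that $I(\alpha)$ equals $\operatorname{B}(\tfrac12,\tfrac n2-1)$ times the left-hand side of \eqref{eq:17}; taking $\alpha=0$, where the argument of ${}_2F_1$ vanishes and ${}_2F_1(\cdot;\cdot;\cdot;0)=1$, shows that $I(0)$ equals $\operatorname{B}(\tfrac12,\tfrac n2-1)$ times the right-hand side of \eqref{eq:17}. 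Hence \eqref{eq:17} is equivalent to the assertion that $I(\alpha)$ is independent of $\alpha$, and this is the one thing that needs to be proved.

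To prove it I would apply Lemma~\ref{prvaLema} with $k=2$ to the function $f_\alpha(x_1,x_2)=(1-2\rho x_1\cos\alpha-2\rho x_2\sin\alpha+\rho^2)^{1-\frac n2}$, which is continuous on $\overline{\mathbb{B}^2}$: for $\rho<1$ its base is $\ge(1-\rho)^2>0$ there. If $\xi\in\partial\mathbb{B}^n$ then $\abs\xi=1$, and since $l_\alpha=\cos\alpha\,e_1+\sin\alpha\,e_2$ is a unit vector, $(f_\alpha\circ P)(\xi)=(1-2\rho\langle\xi,l_\alpha\rangle+\rho^2)^{1-\frac n2}=\abs{\xi-\rho l_\alpha}^{2-n}$. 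Thus Lemma~\ref{prvaLema} gives $I(\alpha)=c_n\int_{\partial\mathbb{B}^n}\abs{\xi-\rho l_\alpha}^{2-n}\,d\sigma(\xi)$ with $c_n=\tfrac{\pi\,\Gamma(\frac n2-1)}{\Gamma(\frac n2)}$ not depending on $\alpha$. Finally, choosing an orthogonal matrix $A$ with $Al_\alpha=e_1$, one has $\abs{A\xi-\rho e_1}=\abs{A(\xi-\rho l_\alpha)}=\abs{\xi-\rho l_\alpha}$, so
\[
 \int_{\partial\mathbb{B}^n}\abs{\xi-\rho l_\alpha}^{2-n}\,d\sigma(\xi)=\int_{\partial\mathbb{B}^n}\abs{A\xi-\rho e_1}^{2-n}\,d\sigma(\xi)=\int_{\partial\mathbb{B}^n}\abs{\eta-\rho e_1}^{2-n}\,d\sigma(\eta),
\]
using the rotation invariance of the normalized surface measure $\sigma$ in the last step. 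The right-hand side is visibly independent of $\alpha$, hence $I(\alpha)=I(0)$, and dividing by $\operatorname{B}(\tfrac12,\tfrac n2-1)$ yields \eqref{eq:17} for all $\rho\in[0,1)$.

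I do not expect a real obstacle here; the only delicate point is that Lemma~\ref{prvaLema} requires its integrand $f$ to be continuous on the \emph{closed} ball, which is why the argument above is run for $\rho<1$. The boundary value $\rho=1$ allowed in Lemma~\ref{trecaLema} is then obtained by letting $\rho\to1^-$ and invoking dominated convergence, the two integrands in \eqref{eq:17} being uniformly dominated for $\rho$ near $1$ and their singularities integrable in the range of $n$ in question. The substantive content of the lemma is therefore just the observation that, once the $y$-integration in Lemma~\ref{drugaLemma} is undone, the left-hand side of \eqref{eq:17} is (a fixed multiple of) the manifestly rotation-invariant sphere integral $\int_{\partial\mathbb{B}^n}\abs{\xi-\rho l_\alpha}^{2-n}\,d\sigma(\xi)$, which cannot depend on $\alpha$ and so coincides with its value at $\alpha=0$ --- namely the right-hand side.
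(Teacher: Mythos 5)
Your proof is correct and follows essentially the same route as the paper: both lift the two-dimensional integral to $\partial\mathbb{B}^n$ via Lemma~\ref{prvaLema}, perform the inner $y$-integration with Lemma~\ref{drugaLemma}, and use rotation invariance of $\sigma$ to eliminate the dependence on $\alpha$. The only (harmless) differences are that you identify the right-hand side as $I(0)$ instead of re-expanding the rotated sphere integral with Lemma~\ref{prvaLema} for $k=1$, and that you supply a limiting argument for the endpoint $\rho=1$, a point the paper passes over.
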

\begin{proof}
Using Lemma \ref{prvaLema} for $f(x,y)=(1-2\rho x \cos\alpha - 2\rho y \sin\alpha + \rho^2)^{1-\frac{n}2},$ where  $(x,y) \in \mathbb{B}^2$, we get
\begin{align*}
&\int_{\partial \mathbb{B}^n} (1-2\rho \xi_1 \cos\alpha - 2\rho \xi_2 \sin\alpha + \rho^2)^{1-\frac{n}2} d\sigma(\xi)  \\
&= \frac{n-2}{2\pi} \int_{\mathbb{B}^2} (1-x^2-y^2)^{\frac{n}2-2} (1-2\rho x \cos\alpha - 2\rho y \sin\alpha + \rho^2)^{1-\frac{n}2} dxdy  \\
&= \frac{n-2}{2\pi}
\int_{-1}^1 \biggl(\int_{-\sqrt{1-x^2}}^{\sqrt{1-x^2}} (1-x^2-y^2)^{\frac{n}2-2} (1-2\rho x \cos\alpha - 2\rho y \sin\alpha + \rho^2)^{1-\frac{n}2} dy\biggr)dx\\
&=\frac{n-2}{2\pi}\operatorname{B}\big(\frac12, \frac{n}2-1\big)
\int_{-1}^1 \frac{(1-x^2)^{\frac{n-3}2}}{(1+\rho^2-2\rho x \cos\alpha)^{\frac{n}2-1}}  {}_2 F_1\bigl(\tfrac{n-2}4, \tfrac{n}4;
\tfrac{n-1}2; \tfrac{4\rho^2 \sin^2\alpha (1-x^2)}{(1+\rho^2-2\rho x \cos\alpha)^2}\bigr) dx,
\end{align*}
by Lemma \ref{drugaLemma}.

On the other hand, introducing the change of variables, $\zeta_1= \xi_1 \cos\alpha + \xi_2 \sin\alpha,$ $\zeta_2 =\xi_1 \sin\alpha - \xi_2 \cos\alpha$ and $\xi_k=\zeta_k$ for $3\leq k \leq n,$ we get:
\begin{multline*}
\int_{\partial\mathbb{B}^n}(1-2\rho \xi_1 \cos \alpha - 2\rho \xi_2 \sin\alpha + \rho^2)^{1-\frac{n}2} d\sigma(\xi)
= \int_{\partial\mathbb{B}^n}(1-2\rho \zeta_1 + \rho^2)^{1-\frac{n}2} d\sigma(\zeta)\\
= \frac{\Gamma(\frac{n}2)}{2\Gamma(\frac32)\Gamma(\frac{n-1}2)}
\int_{-1}^1 (1-x^2)^{\frac{n-3}2} (1-2\rho x + \rho^2)^{1-\frac{n}2} dx.
\end{multline*}
Comparing these two expressions for $\int_{\partial\mathbb{B}^n}(1-2\rho \xi_1 \cos
\alpha - 2\rho \xi_2 \sin\alpha + \rho^2)^{1-\frac{n}2} d\sigma(\xi)$ we
conclude the proof of \eqref{eq:17}.
\end{proof}

One more necessary identity is given by the following lemma. 
\begin{lemma}\label{cetvrtaLema}
  There holds the following identity for all $\rho \in [0,1]$ and $\alpha \in
  [0,\frac{\pi}{2}]:$
	\begin{multline}
	\label{eq:18}
	\int_{-1}^1 \frac{x^2(1-x^2)^{\frac{n-3}2}}{(1+\rho^2-2\rho x \cos\alpha)^{\frac{n}2-1}}  {}_2 F_1\bigl(\tfrac{n-2}4, \tfrac{n}4;
	\tfrac{n-1}2; \tfrac{4\rho^2 \sin^2\alpha (1-x^2)}{(1+\rho^2-2\rho x
    \cos\alpha)^2}\bigr) dx \\
	= \frac{\sin^2\alpha }{n-1}
   \int_{-1}^{1} (1-x^2)^{\frac{n-1}2} (1-2\rho x + \rho^2)^{1-\frac{n}2} dx \\
    + \cos^2\alpha \int_{-1}^1 x^2(1-x^2)^{\frac{n-3}2} (1-2\rho x + \rho^2)^{1-\frac{n}2} dx.
	\end{multline}
\end{lemma}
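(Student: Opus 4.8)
The plan is to evaluate the single ``master integral''
\[
  J(\alpha):=\int_{\partial\mathbb{B}^n}\xi_1^2\,\bigl(1-2\rho\xi_1\cos\alpha-2\rho\xi_2\sin\alpha+\rho^2\bigr)^{1-\frac n2}\,d\sigma(\xi)
\]
in two different ways and to compare the outcomes, exactly along the lines of the proof of Lemma~\ref{trecaLema}; the only new feature is the quadratic weight $\xi_1^2$. (For $\rho\in[0,1)$ the integrand is bounded, so everything below is legitimate, and the endpoint $\rho=1$ follows by continuity.)

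First I would compute $J(\alpha)$ ``from the inside''. Since the integrand depends on $\xi_1,\xi_2$ only, Lemma~\ref{prvaLema} with $k=2$ reduces it to an integral over $\mathbb{B}^2$, which after slicing in $x$ becomes
\[
  J(\alpha)=\frac{n-2}{2\pi}\int_{-1}^1 x^2\!\left(\int_{-\sqrt{1-x^2}}^{\sqrt{1-x^2}}\frac{(1-x^2-y^2)^{\frac n2-2}}{(1-2\rho x\cos\alpha-2\rho y\sin\alpha+\rho^2)^{\frac n2-1}}\,dy\right)dx .
\]
The inner integral is precisely the one evaluated in Lemma~\ref{drugaLemma}, \eqref{eq:16}, so this step already identifies $J(\alpha)$ with $\frac{n-2}{2\pi}\operatorname{B}(\tfrac12,\tfrac n2-1)$ times the left-hand side of \eqref{eq:18}.

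Next I would compute $J(\alpha)$ ``from the outside'' using the same rotation as in the proof of Lemma~\ref{trecaLema}: put $\zeta_1=\xi_1\cos\alpha+\xi_2\sin\alpha$, $\zeta_2=\xi_1\sin\alpha-\xi_2\cos\alpha$, and $\zeta_k=\xi_k$ for $3\le k\le n$. This map is orthogonal, hence $\sigma$-preserving, and $1-2\rho\xi_1\cos\alpha-2\rho\xi_2\sin\alpha+\rho^2=1-2\rho\zeta_1+\rho^2$, while $\xi_1=\zeta_1\cos\alpha+\zeta_2\sin\alpha$, so that $\xi_1^2=\zeta_1^2\cos^2\alpha+2\zeta_1\zeta_2\sin\alpha\cos\alpha+\zeta_2^2\sin^2\alpha$. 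In the transformed integral the cross term integrates to zero by oddness in $\zeta_2$; for the $\zeta_2^2$ term one uses that the coordinates $\zeta_2,\dots,\zeta_n$ occur symmetrically in the remaining factor and that $\sum_{k\ge 2}\zeta_k^2=1-\zeta_1^2$ on $\partial\mathbb{B}^n$, whence
\[
  \int_{\partial\mathbb{B}^n}\zeta_2^2\,(1-2\rho\zeta_1+\rho^2)^{1-\frac n2}\,d\sigma=\frac1{n-1}\int_{\partial\mathbb{B}^n}(1-\zeta_1^2)\,(1-2\rho\zeta_1+\rho^2)^{1-\frac n2}\,d\sigma .
\]
The two remaining integrands depend on $\zeta_1$ alone, so Lemma~\ref{prvaLema} with $k=1$ converts them into the one-dimensional integrals on the right-hand side of \eqref{eq:18}, and collecting terms yields $J(\alpha)=\frac{\Gamma(n/2)}{2\Gamma(3/2)\Gamma((n-1)/2)}$ times that right-hand side.

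Comparing the two evaluations, \eqref{eq:18} follows provided the prefactors coincide, i.e.
\(
  \tfrac{n-2}{2\pi}\operatorname{B}(\tfrac12,\tfrac n2-1)=\tfrac{\Gamma(n/2)}{2\Gamma(3/2)\Gamma((n-1)/2)},
\)
and this is immediate from $\operatorname{B}(\tfrac12,\tfrac n2-1)=\sqrt\pi\,\Gamma(\tfrac n2-1)/\Gamma(\tfrac{n-1}2)$, $(n-2)\Gamma(\tfrac n2-1)=2\Gamma(\tfrac n2)$, and $\Gamma(\tfrac32)=\tfrac{\sqrt\pi}2$. I do not expect a genuine obstacle: the argument is a direct transcription of the proof of Lemma~\ref{trecaLema}, and the only points needing care are the symmetrization step that produces the factor $\tfrac1{n-1}$ (and hence the weight $\tfrac{\sin^2\alpha}{n-1}$ in \eqref{eq:18}) and the bookkeeping of the Gamma-function constants.
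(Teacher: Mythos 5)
Your proposal is correct and is essentially the paper's own proof: both evaluate $\int_{\partial\mathbb{B}^n}\xi_1^2\,(1-2\rho\xi_1\cos\alpha-2\rho\xi_2\sin\alpha+\rho^2)^{1-\frac n2}\,d\sigma$ in two ways, once via Lemma \ref{prvaLema} with $k=2$ and Lemma \ref{drugaLemma}, and once after the rotation $\zeta_1=\xi_1\cos\alpha+\xi_2\sin\alpha$, with the cross term killed by oddness in $\zeta_2$. The only immaterial difference is that you reduce the $\zeta_2^2$ integral by symmetrizing over $\zeta_2,\dots,\zeta_n$ (producing the factor $\tfrac1{n-1}$ directly), whereas the paper computes it over $\mathbb{B}^2$ by integrating out the $y$-variable first; the resulting constants coincide.
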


\begin{proof}
Similarly as in Lemma \ref{trecaLema}, using Lemma \ref{prvaLema} and Lemma \ref{drugaLemma} for function $f(x,y)=x^2(1-2\rho x \cos\alpha - 2\rho y \sin\alpha + \rho^2)^{1-\frac{n}2},$ we get
\begin{multline*}
\int_{\partial \mathbb{B}^n} \xi_1^2(1-2\rho \xi_1 \cos\alpha - 2\rho \xi_2
\sin\alpha + \rho^2)^{1-\frac{n}2} d\sigma(\xi)  \\
=\frac{n-2}{2\pi}\operatorname{B}\big(\frac12, \frac{n}2-1\big)
\int_{-1}^1 \frac{x^2(1-x^2)^{\frac{n-3}2}}{(1+\rho^2-2\rho x \cos\alpha)^{\frac{n}2-1}}  {}_2 F_1\bigl(\tfrac{n-2}4, \tfrac{n}4;
\tfrac{n-1}2; \tfrac{4\rho^2 \sin^2\alpha (1-x^2)}{(1+\rho^2-2\rho x \cos\alpha)^2}\bigr) dx.
\end{multline*}
On the other hand, introducing the change of variables, $\zeta_1= \xi_1
\cos\alpha + \xi_2\sin\alpha,$ $\zeta_2=\xi_1 \sin\alpha - \xi_2 \cos\alpha$ and
$\zeta_k=\xi_k$ for $3\leq k \leq n,$ we get:
\begin{multline*}
\int_{\partial\mathbb{B}^n}\xi_1^2(1-2\rho \xi_1 \cos \alpha - 2\rho \xi_2 \sin\alpha + \rho^2)^{1-\frac{n}2} d\sigma(\xi)\\
= \int_{\partial\mathbb{B}^n}(\zeta_1 \cos\alpha + \zeta_2 \sin\alpha)^2(1-2\rho \zeta_1 + \rho^2)^{1-\frac{n}2} d\sigma(\zeta)\\
= \cos^2 \alpha \int_{\partial\mathbb{B}^n}\zeta_1^2 (1-2\rho \zeta_1 + \rho^2)^{1-\frac{n}2} d\sigma(\zeta) +
\sin^2 \alpha \int_{\partial\mathbb{B}^n}\zeta_2^2 (1-2\rho \zeta_1 + \rho^2)^{1-\frac{n}2} d\sigma(\zeta).
\end{multline*}
The integral $\int_{\partial\mathbb{B}^n}\zeta_1 \zeta_2(1-2\rho \zeta_1 +
\rho^2)^{1-\frac{n}2} d\sigma(\zeta)$ is equal to zero, since the function under
the integral sign is odd on $\zeta_2.$

These integrals we expand using Lemma \ref{prvaLema} thus obtaining:
\begin{align*}
\int_{\partial\mathbb{B}^n}\zeta_1^2 (1-2&\rho \zeta_1 + \rho^2)^{1-\frac{n}2} d\sigma(\zeta)\\
&=\frac{\Gamma(\frac{n}2)}{2\Gamma(\frac32)\Gamma(\frac{n-1}2)}
\int_{-1}^1 (1-x^2)^{\frac{n-3}2} x^2 (1-2\rho x + \rho^2)^{1-\frac{n}2} dx,\\
\int_{\partial\mathbb{B}^n}\zeta_2^2 (1-2&\rho \zeta_1 + \rho^2)^{1-\frac{n}2} d\sigma(\zeta)\\
&=\frac{\Gamma(\frac{n}2)}{\pi\Gamma(\frac{n}2-1)}
\int_{\mathbb{B}^2}y^2 (1-x^2-y^2)^{\frac{n-4}2} (1-2\rho x + \rho^2)^{1-\frac{n}2} dxdy \\
\intertext{Second integral we calculate integrating first over $y$-variable:}
\int_{\mathbb{B}^2}y^2 (1-&x^2-y^2)^{\frac{n-4}2} (1-2\rho x + \rho^2)^{1-\frac{n}2} dxdy \\
&=\int_{-1}^{1}  (1-2\rho x + \rho^2)^{1-\frac{n}2} \bigg(\int_{-\sqrt{1-x^2}}^{\sqrt{1-x^2}} y^2(1-x^2-y^2)^{\frac{n-4}{2}} dy  \bigg) dx \\
&=\operatorname{B}\big(\frac32, \frac{n}2-1\big) \int_{-1}^{1} (1-x^2)^{\frac{n-1}{2}}  (1-2\rho x + \rho^2)^{1-\frac{n}2} dx.
\end{align*}
The procedure is the same as in the some of previous calculations. Finally,
equalizing two expressions for the same integral we get the identity \eqref{eq:18}.
\end{proof}

The last lemma, we mention here has already been proved by Marković and its
half-space counterpart by Maz'ya and Kresin, but we will also give an easy and
quick proof.

\begin{lemma}\label{petaLema}
  For all $\rho \in [0,1)$ and $\alpha \in [0,\frac{\pi}{2}],$ we have the next
  identity:
\begin{equation}
\label{eq:19}
\int_{-1}^1 \frac{\big(\frac{n-2}{n}\rho\cos\alpha-x\big)\big(1-x^2\big)^{\frac{n-3}2}}{(1+\rho^2-2\rho x \cos\alpha)^{\frac{n}2-1}}  {}_2 F_1\bigl(\tfrac{n-2}4, \tfrac{n}4;
\tfrac{n-1}2; \tfrac{4\rho^2 \sin^2\alpha (1-x^2)}{(1+\rho^2-2\rho x \cos\alpha)^2}\bigr) dx=0.
\end{equation}	
\end{lemma}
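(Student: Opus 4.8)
The plan is to follow exactly the template of Lemmas \ref{trecaLema} and \ref{cetvrtaLema}: interpret the left-hand side, up to the constant $\frac{n-2}{2\pi}\operatorname{B}(\frac12,\frac n2-1)$, as an integral over $\partial\mathbb B^n$ via the combination of Lemma \ref{prvaLema} and Lemma \ref{drugaLemma}, and then show that this spherical integral vanishes by a symmetry argument. Concretely, apply Lemma \ref{prvaLema} to the function
\[
f(x,y)=\bigl(\tfrac{n-2}{n}\rho\cos\alpha-x\bigr)\,\bigl(1-2\rho x\cos\alpha-2\rho y\sin\alpha+\rho^2\bigr)^{1-\frac n2},\qquad (x,y)\in\mathbb B^2,
\]
so that, after carrying out the inner $y$-integration through Lemma \ref{drugaLemma}, the right-hand side of \eqref{eq:15} becomes precisely $\frac{n-2}{2\pi}\operatorname{B}(\frac12,\frac n2-1)$ times the integral in \eqref{eq:19}. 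Thus it suffices to prove
\[
\int_{\partial\mathbb B^n}\bigl(\tfrac{n-2}{n}\rho\cos\alpha-\xi_1\bigr)\bigl(1-2\rho\xi_1\cos\alpha-2\rho\xi_2\sin\alpha+\rho^2\bigr)^{1-\frac n2}\,d\sigma(\xi)=0.
\]

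To evaluate this, rotate coordinates as in the previous two lemmas: put $\zeta_1=\xi_1\cos\alpha+\xi_2\sin\alpha$, $\zeta_2=\xi_1\sin\alpha-\xi_2\cos\alpha$, $\zeta_k=\xi_k$ for $k\ge 3$; then $\xi_1=\zeta_1\cos\alpha+\zeta_2\sin\alpha$ and the kernel depends only on $\zeta_1$, becoming $(1-2\rho\zeta_1+\rho^2)^{1-\frac n2}$. The integral splits as
\[
\bigl(\tfrac{n-2}{n}\rho\cos\alpha-\cos\alpha\,\langle\zeta_1\rangle\bigr)\text{-part}\;-\;\sin\alpha\int_{\partial\mathbb B^n}\zeta_2\,(1-2\rho\zeta_1+\rho^2)^{1-\frac n2}\,d\sigma(\zeta),
\]
and the last integral is $0$ because the integrand is odd in $\zeta_2$. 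So everything reduces to the single-variable identity
\[
\int_{\partial\mathbb B^n}\zeta_1\,(1-2\rho\zeta_1+\rho^2)^{1-\frac n2}\,d\sigma(\zeta)=\frac{n-2}{n}\rho\int_{\partial\mathbb B^n}(1-2\rho\zeta_1+\rho^2)^{1-\frac n2}\,d\sigma(\zeta),
\]
i.e. the first moment of the Poisson-type kernel over the sphere equals $\frac{n-2}{n}\rho$ times its mass.

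I would prove this last identity in the cleanest available way. Using Lemma \ref{prvaLema} with $k=1$, both sides reduce to one-dimensional integrals of the form $\int_{-1}^1 x^{j}(1-x^2)^{\frac{n-3}2}(1-2\rho x+\rho^2)^{1-\frac n2}\,dx$ for $j=0,1$; alternatively, and perhaps more transparently, one can recognize $(1-2\rho\zeta_1+\rho^2)^{1-n/2}=|{\rho e_1-\zeta}|^{2-n}$ and use that $x\mapsto\langle x,e_1\rangle=x_1$ together with the mean value property / harmonicity structure already encoded in formula \eqref{eq:13}: indeed \eqref{eq:13}–\eqref{eq:14} were derived from $C(\rho e_1,l)=\int_{\partial\mathbb B^n}|\langle\nabla P(x,\zeta),l\rangle|\,d\sigma(\zeta)$, and the vanishing \eqref{eq:19} is exactly the statement that the signed integral $\int_{\partial\mathbb B^n}\langle\nabla P(\rho e_1,\zeta),l\rangle\,d\sigma(\zeta)=\langle\nabla(\int P(\rho e_1,\zeta)\,d\sigma(\zeta)),l\rangle=\langle\nabla 1,l\rangle=0$, since the Poisson integral of the constant $1$ is $1$. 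This gives an essentially computation-free proof: the content of \eqref{eq:19} is that the extremal boundary data $\operatorname{sgn}(\tfrac{n-2}{n}\rho\cos\alpha-\xi_1)$ can be dropped precisely because the directional derivative of the harmonic extension of $1$ is zero. The only mildly delicate point is bookkeeping: checking that the constant produced by Lemma \ref{prvaLema} composed with Lemma \ref{drugaLemma} matches the normalization in \eqref{eq:19} on the nose, which I expect to be routine but is the one place to be careful.
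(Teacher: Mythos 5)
Your proposal is correct and its preferred route---recognizing the signed integral as $\int_{\partial\mathbb{B}^n}\langle\nabla P(\rho e_1,\zeta),l\rangle\,d\sigma(\zeta)=\langle\nabla 1,l\rangle=0$ because the Poisson extension of the constant $1$ is $1$---is exactly the argument the paper gives, which likewise observes that dropping the absolute value turns the chain of transformations leading to the representation of $C(\rho e_1,l)$ into the directional derivative of the harmonic extension of the constant function. The computational backup you sketch (rotation plus the first-moment identity $\int_{\partial\mathbb{B}^n}\zeta_1(1-2\rho\zeta_1+\rho^2)^{1-\frac{n}{2}}\,d\sigma=\tfrac{n-2}{n}\rho\int_{\partial\mathbb{B}^n}(1-2\rho\zeta_1+\rho^2)^{1-\frac{n}{2}}\,d\sigma$) is also valid but not needed.
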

\begin{proof}
Denote that the integrand is similar to the representation of $C(\rho e_1,v)$	with the one, but crucial difference---we do not have absolute brackets around the $\frac{n-2}{n}\rho\cos\alpha-x.$ So, since all transformations that we apply to obtain our integral expression for $C(\rho e_1,v)$	save equality without these brackets, we have that 
$$\int_{-1}^1 \frac{\big(\frac{n-2}{n}\rho\cos\alpha-x\big)\big(1-x^2\big)^{\frac{n-3}2}}{(1+\rho^2-2\rho x \cos\alpha)^{\frac{n}2-1}}  {}_2 F_1\bigl(\tfrac{n-2}4, \tfrac{n}4;
\tfrac{n-1}2; \tfrac{4\rho^2 \sin^2\alpha (1-x^2)}{(1+\rho^2-2\rho x \cos\alpha)^2}\bigr) dx$$
is, in fact, by \eqref{eq:12}, equal to $\langle \nabla P(z,\zeta)f(\zeta), (1,1,\dots,1)\rangle,$ on $\partial\mathbb{B}^n$ for function $f(\zeta)=1$ on $\partial\mathbb{B}^n$ and $z \in \mathbb{B}^n$. But this is equal to is, in fact, equal to $\langle \nabla f(z), (1,1,\dots,1)\rangle,$ on $\mathbb{B}^n$ and since $f(z)=1,$ by the uniqueness of harmonic extension, the proof follows.	
\end{proof}

\section{Construction of the majorant}

As we have said in the introduction, the crux of the proof is construction of
the majorant with the maximum in $\alpha=0$ and the same value in $\alpha=0$ as
$C(\rho e_1,l).$

Starting from the general representation formula, we split the hypergeometric
function into two parts:
\begin{multline*}
\int_{-1}^1 \frac{|\frac{n-2}{n}\rho\cos\alpha-x|(1-x^2)^{\frac{n-3}2}}{(1+\rho^2-2\rho x \cos\alpha)^{\frac{n}2-1}}  {}_2 F_1\bigl(\tfrac{n-2}4, \tfrac{n}4;
\tfrac{n-1}2; \tfrac{4\rho^2 \sin^2\alpha (1-x^2)}{(1+\rho^2-2\rho x
  \cos\alpha)^2}\bigr) dx \\
=\int_{-1}^1
\frac{|\frac{n-2}{n}\rho\cos\alpha-x|(1-x^2)^{\frac{n-3}2}}{(1+\rho^2-2\rho x
  \cos\alpha)^{\frac{n}2-1}} dx \\
+\int_{-1}^1 \frac{|\frac{n-2}{n}\rho\cos\alpha-x|(1-x^2)^{\frac{n-3}2}}{(1+\rho^2-2\rho x \cos\alpha)^{\frac{n}2-1}}  \big({}_2 F_1\bigl(\tfrac{n-2}4, \tfrac{n}4;
\tfrac{n-1}2; \tfrac{4\rho^2 \sin^2\alpha (1-x^2)}{(1+\rho^2-2\rho x \cos\alpha)^2}\bigr)-1\big) dx.
\end{multline*}

Now, lemmas from the previous section suggest that we can evaluate the last
integral with $1$ or $\big(\frac{n-2}{n}\rho\cos\alpha-x\big)^2$ in place of
$|\frac{n-2}{n}\rho\cos\alpha-x|,$ so we estimate it by Cauchy-Schwarz
inequality:
\begin{multline*}
\int_{-1}^1 \frac{|\frac{n-2}{n}\rho\cos\alpha-x|(1-x^2)^{\frac{n-3}2}}{(1+\rho^2-2\rho x \cos\alpha)^{\frac{n}2-1}}  \big({}_2 F_1\bigl(\tfrac{n-2}4, \tfrac{n}4;
\tfrac{n-1}2; \tfrac{4\rho^2 \sin^2\alpha (1-x^2)}{(1+\rho^2-2\rho x \cos\alpha)^2}\bigr)-1\big) dx \\
\leq \bigg(\int_{-1}^1 \frac{(1-x^2)^{\frac{n-3}2}}{(1+\rho^2-2\rho x \cos\alpha)^{\frac{n}2-1}}  \big({}_2 F_1\bigl(\tfrac{n-2}4, \tfrac{n}4;
\tfrac{n-1}2; \tfrac{4\rho^2 \sin^2\alpha (1-x^2)}{(1+\rho^2-2\rho x
  \cos\alpha)^2}\bigr)-1\big) dx \bigg)^{\frac{1}{2}} \times \\
\times  \bigg(\int_{-1}^1 \frac{\big(\frac{n-2}{n}\rho\cos\alpha-x\big)^2(1-x^2)^{\frac{n-3}2}}{(1+\rho^2-2\rho x \cos\alpha)^{\frac{n}2-1}}  \big({}_2 F_1\bigl(\tfrac{n-2}4, \tfrac{n}4;
\tfrac{n-1}2; \tfrac{4\rho^2 \sin^2\alpha (1-x^2)}{(1+\rho^2-2\rho x \cos\alpha)^2}\bigr)-1\big) dx \bigg)^{\frac{1}{2}}.
\end{multline*}

Let us denote:
\begin{equation}
\label{eq:20}
S(\alpha) =\int_{-1}^1 \frac{|\frac{n-2}{n}\rho\cos\alpha-x|(1-x^2)^{\frac{n-3}2}}{(1+\rho^2-2\rho x \cos\alpha)^{\frac{n}2-1}} dx,
\end{equation}
\begin{equation}
\label{eq:21}
S_1(\alpha)=\int_{-1}^1 \frac{(1-x^2)^{\frac{n-3}2}}{(1+\rho^2-2\rho x \cos\alpha)^{\frac{n}2-1}}  \big({}_2 F_1\bigl(\tfrac{n-2}4, \tfrac{n}4;
\tfrac{n-1}2; \tfrac{4\rho^2 \sin^2\alpha (1-x^2)}{(1+\rho^2-2\rho x \cos\alpha)^2}\bigr)-1\big) dx 
\end{equation}
and
\begin{equation}
\label{eq:22}
S_2(\alpha) = \int_{-1}^1 \frac{\big(\frac{n-2}{n}\rho\cos\alpha-x\big)^2(1-x^2)^{\frac{n-3}2}}{(1+\rho^2-2\rho x \cos\alpha)^{\frac{n}2-1}}  \big({}_2 F_1\bigl(\tfrac{n-2}4, \tfrac{n}4;
              \tfrac{n-1}2; \tfrac{4\rho^2 \sin^2\alpha (1-x^2)}{(1+\rho^2-2\rho x \cos\alpha)^2}\bigr)-1\big) dx.
\end{equation}            
Majorant for $C(\rho e_1,l_{\alpha})$ which we have searched for is $\tilde{C}=\frac{n}{1-\rho^2}\frac{n-2}{2\pi}\operatorname{B}\big(\frac12,
\frac{n}2-1\big)\big(S+\sqrt{S_1S_2}\big).$ Denote that $C(0)=\tilde{C}(0),$ as it is needed.

\section{Proof of the Theorem 2}

In this section we will find the explicit formulas for the functions
$S(\alpha),\, S_1(\alpha)$ and $S_2(\alpha)$ when $n=3.$ We see that:

\begin{align*}
\int_{-1}^1 &x^2(1-2\rho x + \rho^2)^{-\frac12} dx \\
&= -\frac{1}{15 \rho^3} \bigl[\sqrt{1+\rho^2-2\rho x}
(2 \rho^4 + 2\rho^3x + \rho^2(3x^2+4)+2+2\rho x)\bigr]\bigl.\bigr|_{-1}^1\\
            &=\frac{1}{15 \rho^3} \bigl(\sqrt{1+\rho^2+2\rho}(2\rho^4-2\rho^3+7\rho^2+2-2\rho) \\
  &\hphantom{=}- \sqrt{1+\rho^2-2\rho}(2\rho^4+2\rho^3+7\rho^2+2+2\rho)\bigr)\\
&= \frac{4\rho^5+10\rho^3}{15\rho^3} = \frac{4}{15} \rho^2+\frac23.
\intertext{and}
\int_{-1}^1 &\frac{1-x^2}{\sqrt{1+\rho^2-2\rho x}}dx \\
&= \frac{1}{15\rho^3}
\bigl[\sqrt{1+\rho^2-2\rho x} (2\rho^4+2\rho^3 x + \rho^2(3x^2-11)+2\rho x+2)\bigr]\bigl.\bigr|_{-1}^1 \\
&= \tfrac43 - \tfrac{4}{15}\rho^2
\end{align*}
thus, by Lemma \ref{cetvrtaLema} obtaining that
\begin{multline*}
 \int_{-1}^1 \frac{x^2}{\sqrt{1+\rho^2-2\rho x \cos\alpha}}
{}_2F_1\bigl(\tfrac14,\tfrac34,1; \tfrac{4\rho^2 \sin^2\alpha (1-x^2)}{(1+\rho^2-2\rho x \cos\alpha)^2}\bigr) dx\\
= (\tfrac{4}{15}\rho^2 + \tfrac23)\cos^2 \alpha + (\tfrac23-\tfrac{2}{15}\rho^2)\sin^2\alpha
\end{multline*}
Also, from Lemma \ref{trecaLema}, we get:
\begin{multline*}
\int_{-1}^1 \frac{{}_2F_1(\tfrac{1}4,\tfrac{3}4,1; \tfrac{4\rho^2 (1-x^2)\sin^2\alpha}{(1+\rho^2-2\rho x \cos\alpha)^2})}{\sqrt{1+\rho^2-2\rho x \cos\alpha}} dx =
\int_{-1}^1 (1-2\rho x + \rho^2)^{-\frac12} dx\\
=-\frac{1}{\rho} \sqrt{1+\rho^2-2\rho x}\,\Bigl.\Bigr|_{-1}^1 = \frac{(1+\rho)-(1-\rho)}{\rho} = 2
\end{multline*}

We need also the following two integrals:
\begin{align*}
&\int_{-1}^1 \frac{(\frac13 \rho\cos\alpha - x)^2}{\sqrt{1+\rho^2-2\rho x\cos\alpha}}dx\\
&= \frac{\sqrt{1+\rho^2-2\rho x\cos\alpha}}{360\rho^3 \cos^3\alpha} \Bigl(-5\rho^4\cos(4\alpha)-72\rho^2 x^2\cos^2\alpha\\
&\phantom{=}\,+8\rho x\cos\alpha(5\rho^2 \cos(2\alpha)-\rho^2-6)+20\rho^2(2+\rho^2)\cos(2\alpha)-23\rho^4-56\rho^2-48 \Bigr) \Bigr|_{-1}^1\\
&= \frac{1}{360\rho^3 \cos^3\alpha} \Bigl((-40\rho^4\cos^4\alpha+8\rho^2(10\rho^2+1)\cos^2\alpha-48(1+\rho^2)^2\Bigr)\\
&\phantom{=}\,\times\bigg(\sqrt{1+\rho^2-2\rho \cos\alpha}-\sqrt{1+\rho^2+2\rho \cos\alpha}\bigg)\\
&\phantom{=}\,+\frac{1}{360\rho^3 \cos^3\alpha}\bigg(80\rho^3\cos^3\alpha-48\rho^3\cos\alpha-48\rho\cos\alpha\bigg)\\
&\phantom{=}\,\times\Bigl( \sqrt{1+\rho^2+2\rho \cos\alpha}+\sqrt{1+\rho^2-2\rho \cos\alpha}\Bigr)
\end{align*}
and 
\begin{align*}
\int_{-1}^1 \frac{dx}{\sqrt{1+\rho^2-2\rho x \cos\alpha}} &=
-\frac{\sqrt{1+\rho^2-2\rho x \cos\alpha}}{\rho\cos\alpha}\biggl.\biggr|_{-1}^1\\
&= \frac{\sqrt{1+\rho^2+2\rho  \cos\alpha}-\sqrt{1+\rho^2-2\rho  \cos\alpha}}{\rho\cos\alpha}.
\end{align*}

Now, appealing to Lemmas \ref{trecaLema},\ref{cetvrtaLema} and \ref{petaLema}, we find $S_1(\alpha)$ and $S_2(\alpha):$
\begin{align*}
S_1(\alpha)&=2-\frac{\sqrt{1+\rho^2+2\rho  \cos\alpha}-\sqrt{1+\rho^2-2\rho  \cos\alpha}}{\rho\cos\alpha}, \\
S_2(\alpha)&=(\tfrac23+\tfrac{4}{15}\rho^2)\cos^2\alpha+(\tfrac23-\tfrac{2}{15}\rho^2)\sin^2\alpha
-\tfrac19\rho^2\cos^2\alpha\cdot 2\\
&\phantom{=}\,+\Bigl(\frac{\rho\cos\alpha}{9}-\frac{10\rho^2+1}{45\rho\cos\alpha}+\frac{2(1+\rho^2)^2}{15\rho^3\cos^3\alpha}\Bigr)\\
&\phantom{=}\,\times  \bigg(\sqrt{1+\rho^2-2\rho \cos\alpha}-\sqrt{1+\rho^2+2\rho \cos\alpha}\bigg)\\
&\phantom{=}\,+\Bigl(\frac{2(1+\rho^2)}{15\rho^2\cos^2\alpha}-\frac{2}{9}\Bigr)\Bigl( \sqrt{1+\rho^2+2\rho \cos\alpha}+\sqrt{1+\rho^2-2\rho \cos\alpha}\Bigr)\\
&=(\tfrac23+\tfrac{2}{45}\rho^2)\cos^2\alpha+(\tfrac23-\tfrac{2}{15}\rho^2)\sin^2\alpha\\
&\phantom{=}\,+\Bigl(\frac{\rho\cos\alpha}{9}-\frac{10\rho^2+1}{45\rho\cos\alpha}+\frac{2(1+\rho^2)^2}{15\rho^3\cos^3\alpha}\Bigr)\\
&\phantom{=}\,\times  \Bigl(\sqrt{1+\rho^2-2\rho \cos\alpha}-\sqrt{1+\rho^2+2\rho \cos\alpha}\Bigr)\\
&\phantom{=}\,+\Bigl(\frac{2(1+\rho^2)}{15\rho^2\cos^2\alpha}-\frac{2}{9}\Bigr)\Bigl( \sqrt{1+\rho^2+2\rho \cos\alpha}+\sqrt{1+\rho^2-2\rho \cos\alpha}\Bigr)\\
\end{align*}
Also, using 
\begin{multline*}
  \int \frac{\frac13 \rho\cos\alpha - x}{\sqrt{1+\rho^2-2\rho x\cos\alpha}}dx \\
  = \frac{\sqrt{1+\rho^2-2\rho x
    \cos\alpha}}{3\rho^2\cos^2\alpha}(1+\rho^2-\rho^2\cos^2\alpha +\rho x
\cos\alpha)
\end{multline*}
we find
\begin{align*}
S(\alpha) &= \int_{-1}^1 \frac{\lvert\frac13 \rho\cos\alpha - x\rvert}{\sqrt{1+\rho^2-2\rho x\cos\alpha}}dx \\
&=\frac{1}{3\rho^2\cos^2\alpha}\Bigl(2(1+\rho^2-\frac23\rho^2\cos^2\alpha)^{\frac32}\\
&\qquad\qquad\qquad-(1+\rho^2-\rho^2\cos^2\alpha+\rho\cos\alpha)\sqrt{1+\rho^2-2\rho\cos\alpha}\\
&\qquad\qquad\qquad-(1+\rho^2-\rho^2\cos^2\alpha-\rho\cos\alpha)\sqrt{1+\rho^2+2\rho\cos\alpha}\Bigr)
\end{align*}

To find a majorant, with which we can handle more effectively, we proceed in the
following manner. We estimate $\sqrt{S_1S_2}$ from the above with
$\sqrt{S_1S_2} \leq t S_1+ \frac{1}{4t} S_2,$ by the arithmetic-geometric mean
inequality with $t=\frac{1}{3}$ and therefore get the majorant
$S+\frac{1}{3}S_1+\frac{3}{4}S_2.$ Denote that this majorant has the same value for $\alpha=0$ as $\frac{3}{2(1-\rho^2)}C(\rho e_1,l),$ with  $l=l_{\alpha}.$

We easily calculate 
\begin{multline}
\label{eq:23}
S+\frac{1}{3}S_1+\frac{3}{4}S_2=\frac{2(1+\rho^2-\frac{2}{3}\rho^2
  \cos^2\alpha)^{\frac{3}{2}}}{3
  \rho^2\cos^2\alpha}+\frac{7}{6}-\frac{\rho^2}{10}+\frac{2\rho^2\cos^2\alpha}{15} \\
+\bigg(\frac{1}{6}-\frac{7(1+\rho^2)}{30\rho^2\cos^2\alpha}\bigg)\bigg(\sqrt{1+\rho^2-2\rho\cos\alpha}+\sqrt{1+\rho^2+2\rho\cos\alpha}\bigg) \\
+\bigg(\frac{\rho\cos\alpha}{12}-\frac{10\rho^2+1}{60\rho\cos\alpha}+\frac{(1+\rho^2)^2}{10\rho^3\cos^3\alpha}\bigg)\bigg(\sqrt{1+\rho^2-2\rho\cos\alpha}-\sqrt{1+\rho^2+2\rho\cos\alpha}\bigg).
\end{multline}
(This formula we use for $\alpha<\frac{\pi}{2},$ while for $\alpha=\frac{\pi}{2}$ we can calculate this majorant from the integral expressions. Also, we expect certain cancellations to achieve the function bounded for $\alpha=\frac{\pi}{2}.$)

We can expand square roots using binomial series as:
\begin{multline*}
\sqrt{1+\rho^2-2\rho\cos\alpha}+\sqrt{1+\rho^2+2\rho\cos\alpha} \\
=\sqrt{1+\rho^2}\bigg( \sum_{k=0}^{+\infty} \binom{\frac{1}{2}}{k}(-1)^k\bigg(\frac{2\rho}{1+\rho^2}\bigg)^k\cos^k\alpha+ \sum_{k=0}^{+\infty} \binom{\frac{1}{2}}{k}\bigg(\frac{2\rho}{1+\rho^2}\bigg)^k\cos^k\alpha\bigg) \\
=2\sqrt{1+\rho^2}\sum_{k=0}^{+\infty}\binom{\frac{1}{2}}{2k}\bigg(\frac{2\rho}{1+\rho^2}\bigg)^{2k}\cos^{2k}\alpha.
\end{multline*}
Similarly:
\begin{multline*}
  \sqrt{1+\rho^2-2\rho\cos\alpha}-\sqrt{1+\rho^2+2\rho\cos\alpha} \\
  =-2\sqrt{1+\rho^2}\sum_{k=0}^{+\infty}\binom{\frac{1}{2}}{2k+1}\bigg(\frac{2\rho}{1+\rho^2}\bigg)^{2k+1}\cos^{2k+1}\alpha
\end{multline*}
and 
\begin{multline*}
\begin{split}
\frac{2(1+\rho^2-\frac{2}{3}\rho^2 \cos^2\alpha)^{\frac{3}{2}}}{3 \rho^2\cos^2\alpha} &= \frac{2(1+\rho^2)^{\frac{3}{2}}}{3\rho^2\cos^2\alpha}\bigg(1-\frac{2\rho^2\cos^2\alpha}{3(1+\rho^2)}\bigg)^{\frac{3}{2}} \\
&=\frac{2(1+\rho^2)^{\frac{3}{2}}}{3\rho^2\cos^2\alpha} \sum_{k=0}^{+\infty}
\binom{\frac{3}{2}}{k} (-1)^k \bigg(\frac{2\rho^2}{3(1+\rho^2)}\bigg)^k
\cos^{2k} \alpha \end{split} \\
=\frac{2(1+\rho^2)^{\frac{3}{2}}}{3\rho^2\cos^2\alpha}-\frac{2\sqrt{1+\rho^2}}{3}+\frac{8\rho^2}{27\sqrt{1+\rho^2}}\sum_{k=0}^{+\infty} \binom{\frac{3}{2}}{k+2} (-1)^k \bigg(\frac{2\rho^2}{3(1+\rho^2)}\bigg)^k \cos^{2k+2} \alpha.
\end{multline*}

These expansions give us:
\begin{equation}
\label{eq:24}
 S+\frac{1}{3}S_1+\frac{3}{4}S_2=\frac{7}{6}-\frac{\rho^2}{10}-\frac{1}{6\sqrt{1+\rho^2}}+\sum_{k=1}^{+\infty} a_k(\rho) \cos^{2k}\alpha,
\end{equation}
where
\begin{align*}
a_k(\rho) = \quad &\frac{4\sqrt{1+\rho^2}}{9} \binom{\frac{3}{2}}{k+1} (-1)^{k+1} \bigg(\frac{2\rho^2}{3(1+\rho^2)}\bigg)^{k}+\frac{\sqrt{1+\rho^2}}{3} \binom{\frac{1}{2}}{2k}\bigg(\frac{2\rho}{1+\rho^2}\bigg)^{2k} \\
 {}-{} &\frac{28}{15\sqrt{1+\rho^2}}\binom{\frac{1}{2}}{2k+2}\bigg(\frac{2\rho}{1+\rho^2}\bigg)^{2k}+\frac{10\rho^2+1}{15\sqrt{1+\rho^2}}\binom{\frac{1}{2}}{2k+1}\bigg(\frac{2\rho}{1+\rho^2}\bigg)^{2k} \\
{}-{} &\frac{\rho\sqrt{1+\rho^2}}{6}\binom{\frac{1}{2}}{2k-1}\bigg(\frac{2\rho}{1+\rho^2}\bigg)^{2k-1}-\frac{8}{5\sqrt{1+\rho^2}}
\binom{\frac{1}{2}}{2k+3}\bigg(\frac{2\rho}{1+\rho^2}\bigg)^{2k},
\end{align*}
for $k\geq 1.$ 
Specially, we have
$$a_1(\rho)=\rho^2\bigg(\frac{2}{15}-\frac{1}{18\sqrt{1+\rho^2}}-\frac{1}{30(1+\rho^2)^{\frac{5}{2}}}\bigg).$$
Formulas
\[ \binom{\frac{1}{2}}{2k}=-\frac{(4k-3)!!}{2^{2k}(2k)!}, \
  \binom{\frac{1}{2}}{2k+1}=\frac{(4k-1)!!}{2^{2k+1}(2k+1)!}, \
  (-1)^{k+1}\binom{\frac{3}{2}}{k+1}=\frac{3(2k-3)!!}{2^{k+1}(k+1)!}, \]
for $k\geq2$, gives us
\begin{align*}
a_k(\rho) &=\frac{2\sqrt{1+\rho^2}(2k-3)!!}{3(k+1)!}\Bigl(\frac{\rho^2}{3(1+\rho^2)}\Bigr)^{k}+ \frac{(4k-5)!!}{2^{2k}(2k+3)!\sqrt{1+\rho^2}}\bigg(\frac{2\rho}{1+\rho^2}\bigg)^{2k}\times \\
  {} \times {} &\bigg( -\frac{1+\rho^2}{3}(4k-3)(2k+1)(2k+2)(2k+3)+\frac{7}{15}(2k+3)(4k-3)(4k-1)(4k+1) \\
    &\hphantom{\bigg(} +\frac{1+\rho^2}{3}(2k+2)(2k+3)(4k-3)(4k-1)-\frac{3}{10}(4k-3)(4k-1)(2k+2)(2k+3) \\
  &\hphantom{\bigg(}-\frac{(1+\rho^2)^2}{6}(2k)(2k+1)(2k+2)(2k+3)-\frac{1}{5}(4k-3)(4k-1)(4k+1)(4k+3)\bigg), \\
\intertext{or, after some routine calculations:}
a_k(\rho) &=\frac{2\sqrt{1+\rho^2}(2k-3)!!}{3(k+1)!}\Bigl(\frac{\rho^2}{3(1+\rho^2)}\Bigr)^{k}\\ &+\frac{(4k-5)!!}{(2k+3)!\sqrt{1+\rho^2}}\bigg(\frac{\rho}{1+\rho^2}\bigg)^{2k} \bigg(-(1+\rho^2)^2\bigg(\frac{8}{3}k^4+8k^3+\frac{22}{3}k^2+2k\bigg) \\
{}+{}&
\bigg(1+\rho^2\bigg)\bigg(\frac{32}{3}k^4+8k^3-\frac{68}{3}k^2-8k+12\bigg)-\frac{32}{3}k^4+16k^3-\frac{70}{3}k^2+17k-3\bigg).
\end{align*}

To complete the proof of our main theorem, we need the following two lemmas.

\begin{lemma}\label{sestaLema}
Coefficients $a_k(\rho)$ are negative for all $k\geq 2$ and $0<\rho<1.$
\end{lemma}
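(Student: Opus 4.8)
The plan is to start from the explicit formula for $a_k(\rho)$ obtained just above (the one valid for $k\ge2$), put $t:=\rho^2\in(0,1)$, and extract from it a manifestly positive factor. Denote by $Q_k(t)$ the long parenthesis appearing there, namely $Q_k(t)=-p_k(1+t)^2+q_k(1+t)+r_k$ with
\[
p_k=\tfrac83k^4+8k^3+\tfrac{22}3k^2+2k,\quad q_k=\tfrac{32}3k^4+8k^3-\tfrac{68}3k^2-8k+12,\quad r_k=-\tfrac{32}3k^4+16k^3-\tfrac{70}3k^2+17k-3 .
\]
Then the formula for $a_k$ reads
\[
a_k(\rho)=\frac{t^k}{(1+t)^{2k+1/2}}\left[\frac{2(2k-3)!!}{3^{k+1}(k+1)!}(1+t)^{k+1}+\frac{(4k-5)!!}{(2k+3)!}Q_k(t)\right].
\]
Since the prefactor is positive for $0<t<1$ and $(2k+3)!/(4k-5)!!>0$, the inequality $a_k(\rho)<0$ is equivalent to $\lambda_k(1+t)^{k+1}<-Q_k(t)$ for $0<t<1$, where $\lambda_k:=2(2k-3)!!(2k+3)!/\bigl(3^{k+1}(k+1)!(4k-5)!!\bigr)$. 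Writing $s:=1+t\in[1,2]$, this is exactly $\lambda_k\,s^{k+1}<g_k(s)$ with $g_k(s):=p_ks^2-q_ks-r_k$, an upward parabola in $s$ (its leading coefficient $p_k$ is positive).

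I would first determine $\min_{s\in[1,2]}g_k(s)$. The vertex is at $s_k^{\ast}=q_k/(2p_k)$, and elementary comparisons of polynomials in $k$ give $q_k<4p_k$ for every $k\ge1$ (so $s_k^{\ast}<2$), while $q_k>2p_k$ holds precisely for $k\ge4$ (so $s_k^{\ast}>1$ exactly when $k\ge4$). Hence $\min_{[1,2]}g_k$ equals $g_k(1)=-Q_k(0)$ when $k\in\{2,3\}$, which is $105$ and $234$ respectively, and equals $g_k(s_k^{\ast})=\dfrac{-q_k^2-4p_kr_k}{4p_k}$ when $k\ge4$. The positivity of the latter reduces to $N(k):=-q_k^2-4p_kr_k>0$; after the degree-$8$ and degree-$7$ parts cancel, $N(k)$ turns out to be a degree-$6$ polynomial with positive leading coefficient $\tfrac{1408}3$, so $N(k)>0$ for all $k\ge4$ (indeed $g_k(s_k^{\ast})=N(k)/(4p_k)$ grows like $44k^2$). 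Consequently $g_k>0$ on $[1,2]$ for all $k\ge2$, and moreover $\min_{[1,2]}g_k\ge234$ for all $k\ge3$; for $k\ge4$ this last bound is the polynomial inequality $N(k)\ge936\,p_k$, checked in the usual way (a finite number of explicit cases together with the eventual sign of the remaining polynomial).

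It then remains to dominate the left-hand side. On $[1,2]$ one has $s^{k+1}\le2^{k+1}$, so it suffices to verify $2^{k+1}\lambda_k<\min_{[1,2]}g_k$. From $\dfrac{\lambda_{k+1}}{\lambda_k}=\dfrac{(2k-1)(2k+5)(2k+4)}{3(k+2)(4k-3)(4k-1)}$ (cross-multiplying and comparing the two resulting cubics in $k$) one gets that this ratio is smaller than $\tfrac12$ for every $k\ge3$; hence $k\mapsto2^{k+1}\lambda_k$ is decreasing on $\{3,4,\dots\}$, and since $\lambda_3=\tfrac{32}3$ this yields $2^{k+1}\lambda_k\le2^4\lambda_3=\tfrac{512}3<234\le\min_{[1,2]}g_k$ for all $k\ge3$, which disposes of $k\ge3$. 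The value $k=2$ is treated directly: there $g_2(s)=140s^2-140s+105$ and $\lambda_2=\tfrac{560}{27}$, and the inequality $\tfrac{560}{27}s^3<140s^2-140s+105$ on $[1,2]$ holds because the difference is a cubic which is positive at $s=1$ and has negative derivative throughout $[1,2]$.

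The step I expect to be the main obstacle is the analysis of $\min_{[1,2]}g_k$: the polynomials in $k$ that control it exhibit a cascade of top-order cancellations — most strikingly, the degree-$8$ and degree-$7$ parts of $-q_k^2-4p_kr_k$ both vanish — so that no crude leading-order estimate works, and one must carry enough terms to expose the true $O(k^2)$ size of the minimum and then weigh it against the geometric decay of $2^{k+1}\lambda_k$, a sequence that is not yet monotone at $k=2$, which is precisely why $k=2$ has to be handled separately.
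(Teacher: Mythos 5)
Your proposal is correct in substance but organizes the final estimate differently from the paper. Both arguments start from the same quantity $-P(k)y^2+Q(k)y+R(k)$ with $y=1+\rho^2\in[1,2]$ weighed against the positive combinatorial term: the paper treats the whole combination as a single function $\Phi(y)$, proves $\Phi$ is increasing on $[1,2]$ by showing the discriminant of $(k-1)P(k)y^2-kQ(k)y-(k+1)R(k)$ is negative, and so reduces everything to the single evaluation at $y=2$, where $-4P+2Q+R=-98k^2-7k+21$, finishing with the monotone sequences $l_k>d_k$. You instead decouple the two competing terms and bound each at its own worst point of $[1,2]$; note that your $2^{k+1}\lambda_k$ equals the paper's $2/l_k$, so your final comparison is $2/l_k<\min_{[1,2]}g_k$ whereas the paper's is the weaker (hence easier) $2/l_k<g_k(2)=98k^2+7k-21$. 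This buys you the freedom not to prove any monotonicity in $y$, but it costs you the vertex analysis of $g_k$ and the degree-six polynomial $N(k)=-q_k^2-4p_kr_k$. Your intermediate claims check out: the cancellation of the degree-$8$ and degree-$7$ parts of $N(k)$ and the leading coefficient $\tfrac{1408}{3}$, the endpoint values $105$ and $234$, the vertex location criteria, and the ratio $\lambda_{k+1}/\lambda_k=\tfrac{2(2k-1)(2k+5)}{3(4k-3)(4k-1)}<\tfrac12$ for $k\ge3$.

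Two repairs are needed. First, your justification of the case $k=2$ is wrong as written: the difference $h(s)=140s^2-140s+105-\tfrac{560}{27}s^3$ has derivative $280s-140-\tfrac{560}{9}s^2$, whose roots are approximately $0.57$ and $3.93$, so $h$ is \emph{increasing} on $[1,2]$, not decreasing; moreover ``positive at $s=1$ and decreasing'' would not by itself give positivity on the whole interval. The correct one-line fix is that $h'>0$ on $[1,2]$ and $h(1)=\tfrac{2275}{27}>0$. Second, the inequality $N(k)\ge 936\,p_k$ for $k\ge4$ is the quantitative heart of your argument for $k\ge4$ and is only asserted; it is true ($N(4)/(4p_4)\approx 466$, $N(5)/(4p_5)\approx 796$, and $N(k)/(4p_k)\sim 44k^2$), but precisely because of the top-order cancellations you point out, a leading-term remark is not a proof: you should exhibit the degree-six polynomial $N(k)-936\,p_k$ and verify its positivity for all integers $k\ge4$ explicitly.
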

\begin{proof}
 We easily see that 
 \begin{multline*}
 a_{k}\rho^{-2k}3^k(1+\rho^2)^{k-\frac{1}{2}}=\frac{(4k-5)!!}{(2k+3)!}\frac{3^k}{(1+\rho^2)^{k+1}}\bigg(-P(k)(1+\rho^2)^2+Q(k)(1+\rho^2)+R(k)\bigg)\\
 +\frac{2}{3}\frac{(2k-3)!!}{(k+1)!},
  \end{multline*}
 where $P(k)=\frac{8}{3}k^4+8k^3+\frac{22}{3}k^2+2k,$ $Q(k)=\frac{32}{3}k^4+8k^3-\frac{68}{3}k^2-8k+12$ and $R(k)=-\frac{32}{3}k^4+16k^3-\frac{70}{3}k^2+17k-3.$
 
 Therefore, the sign of the $a_k$ is determined by the sign of the expression on the right hand side of the previous equation. Let us consider the function:
  $$\Phi(y)=\frac{3^k(4k-5)!!}{(2k+3)!}\bigg[-P(k)y^{-k+1}+Q(k)y^{-k}+R(k)y^{-k-1}\bigg] +\frac{2}{3}\frac{(2k-3)!!}{(k+1)!}.$$
 First, we will prove that it is monotone increasing on $y$. Namely, its first derivative is equal to:
 $$\Phi'(y)=\frac{3^k(4k-5)!!}{(2k+3)!}y^{-k-2} \big[(k-1)P(k)y^2-kQ(k)y-(k+1)R(k)\big], $$
 and the quadratic polynomial $(k-1)P(k)y^2-kQ(k)y-(k+1)R(k)$ is positive for all $k \geq 2$ and $1\leq y \leq 2,$ since its discriminant $D=k^2Q^2(k)+4(k^2-1)P(k)R(k)=-\frac{3200}{9}k^8-544k^7+\frac{5824}{9}k^6+952k^5-\frac{3488}{9}k^4-432k^3+96k^2+24k$ is negative for $k \geq 2.$

 Hence, $\Phi(y)\leq \Phi(2),$ which gives us:
 
 $$\frac{(4k-5)!!}{(2k+3)!}\frac{3^k}{(1+\rho^2)^{k+1}}[-P(k)(1+\rho^2)^2+Q(k)(1+\rho^2)+R(k)]+\frac{2}{3}\frac{(2k-3)!!}{(k+1)!}$$
 $$\leq \frac{(4k-5)!!}{2(2k+3)!}\bigg(\frac{3}{2}\bigg)^k[-4P(k)+2Q(k)+R(k)]+\frac{2}{3}\frac{(2k-3)!!}{(k+1)!},$$
 
 and, taking into account that $-4P(k)+2Q(k)+R(k)=-98k^2-7k+21,$ to finish the proof of the lemma, we must prove
 
 $$ l_k=\frac{(4k-5)!!(k+1)!}{(2k-3)!!(2k+3)!}\bigg(\frac{3}{2}\bigg)^{k+1}> \frac{2}{98k^2+7k-21}=d_k. $$
 
But, since $l_{k+1}=\frac{3(4k-1)(4k-3)(k+2)}{2(2k+5)(2k+4)(2k-1)}l_k$ and $\frac{3(4k-1)(4k-3)(k+2)}{2(2k+5)(2k+4)(2k-1)}>1$ for all $k \geq 2,$ we see that $l_k$ is increasing, while $d_k$ is decreasing. It is easy to check that $l_2>d_2$ and therefore the proof of this lemma is complete.

\end{proof}

Let us consider, like in \eqref{eq:24}, the function 

\begin{equation}
\label{eq:25}
 T(t)=\frac{7}{6}-\frac{\rho^2}{10}-\frac{1}{6\sqrt{1+\rho^2}}+\sum_{k=1}^{+\infty} a_k(\rho)t^k.
\end{equation}

The next lemma considers behavior od this function near the point $t=1.$

\begin{lemma}\label{sedmaLema}
$T_{-}'(1)\geq 0.$
\end{lemma}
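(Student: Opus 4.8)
The plan is to turn $T_{-}'(1)\ge 0$ into an elementary one-variable inequality. Fix $0<\rho<1$ (for $\rho=0$ everything is trivial). By \eqref{eq:24}, $T$ is precisely the right-hand side of \eqref{eq:23} re-expressed through $t=\cos^2\alpha$ (so $\cos\alpha=\sqrt t$); since there $1+\rho^2-\tfrac23\rho^2 t$ stays near $1+\tfrac13\rho^2>0$ and $1+\rho^2\mp 2\rho\sqrt t$ near $(1\mp\rho)^2>0$ for $t$ close to $1$, this closed form is real-analytic at $t=1$. Hence $T_{-}'(1)$ is the ordinary derivative $\tfrac{d}{dt}T(t)\big|_{t=1}$ (equivalently $\sum_{k\ge 1}k\,a_k(\rho)$), and I would compute it by differentiating \eqref{eq:23} in $t$ and setting $t=1$, using $\tfrac{d}{dt}\sqrt t=\tfrac1{2\sqrt t}$ together with the evaluations $\sqrt{1+\rho^2\mp 2\rho}=1\mp\rho$ and $\tfrac{d}{dt}\sqrt{1+\rho^2\pm 2\rho\sqrt t}\big|_{t=1}=\tfrac{\pm\rho}{2(1\pm\rho)}$.

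Every term of \eqref{eq:23} except the first contributes, after this differentiation, a rational function of $\rho$ at $t=1$; the first term $\dfrac{2(1+\rho^2-\frac23\rho^2 t)^{3/2}}{3\rho^2 t}$ contributes precisely $-\dfrac{2(1+\frac43\rho^2)}{3\rho^2}\sqrt{1+\frac13\rho^2}$. I therefore expect an identity
\[
T_{-}'(1)=Q(\rho)-\frac{2\bigl(1+\tfrac43\rho^2\bigr)}{3\rho^2}\sqrt{1+\tfrac13\rho^2},
\]
in which the a priori $\tfrac1{1-\rho^2}$-singularities of the rational function $Q$ cancel (consistently with $T_{-}'(1)$ staying finite as $\rho\to 1^-$) and $Q$ collapses to the clean form $Q(\rho)=\dfrac{11\rho^4+60\rho^2+40}{60\rho^2}$. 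Two cheap checks guard against slips in this bookkeeping: the vanishing of the $\tfrac1{1-\rho^2}$-pole, and the expansion $T_{-}'(1)=\tfrac{2}{45}\rho^2+O(\rho^4)$ near $\rho=0$, which agrees with $a_1(\rho)=\rho^2\bigl(\tfrac2{15}-\tfrac1{18\sqrt{1+\rho^2}}-\tfrac1{30(1+\rho^2)^{5/2}}\bigr)$ and $a_k(\rho)=O(\rho^{2k})$.

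To finish, I would multiply the above identity by $3\rho^2>0$, reducing $T_{-}'(1)\ge 0$ to
\[
\frac{11\rho^4+60\rho^2+40}{20}\ \ge\ 2\Bigl(1+\tfrac43\rho^2\Bigr)\sqrt{1+\tfrac13\rho^2}.
\]
Both sides are positive, so squaring is legitimate; after the substitution $\mu=\tfrac13\rho^2$ the claim becomes
\[
\bigl(99\mu^2+180\mu+40\bigr)^2-1600\bigl(1+4\mu\bigr)^2\bigl(1+\mu\bigr)=\mu^2\bigl(9801\mu^2+10040\mu+1920\bigr)\ \ge\ 0,
\]
which is evident because every displayed coefficient is positive; equality holds only in the limit $\rho\to 0$. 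The argument is conceptually short; its one genuinely laborious ingredient is the explicit differentiation of \eqref{eq:23} and its simplification to the displayed rational form, and the two sanity checks above make that step essentially foolproof.
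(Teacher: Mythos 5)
Your proposal is correct and follows essentially the same route as the paper: differentiate the closed form \eqref{eq:23} at $t=1$ (the paper differentiates in $\alpha$ and divides by $-2\sin\alpha\cos\alpha$, which amounts to the same thing), isolate the single irrational contribution $-\tfrac{2(1+\frac43\rho^2)}{3\rho^2}\sqrt{1+\tfrac13\rho^2}$, and square to reduce the claim to the polynomial inequality $(11\rho^4+60\rho^2+40)^2-1600(1+\tfrac43\rho^2)^2(1+\tfrac13\rho^2)\ge 0$, which is exactly the paper's. The only discrepancy is an overall positive factor ($\tfrac1{60\rho^2}$ in your identity versus $\tfrac1{30\rho^2}$ in the paper's displayed limit; your normalization is the one consistent with $T_{-}'(1)\sim a_1(\rho)\sim\tfrac{2}{45}\rho^2$ as $\rho\to 0$), which is immaterial to the sign.
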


\begin{proof}
Let us first denote that, in fact, $(S+\frac{1}{3}S_1+\frac{3}{4}S_2)(\alpha)=T(t),$ for $t=\cos^2\alpha.$ Hence, we have:

$$T_{-}'(1)=\lim\limits_{\alpha \rightarrow 0+} \frac{(S+\frac{1}{3}S_1+\frac{3}{4}S_2)'(\alpha)}{-2\sin\alpha \cos\alpha}.$$

Differentiating \eqref{eq:23}, we get:

$$(S+\frac{1}{3}S_1+\frac{3}{4}S_2)'(\alpha)=\frac{4\sin\alpha\sqrt{1+\rho^2-\frac{2}{3}\rho^2\cos^2\alpha}}{3\rho^2\cos^3\alpha}\big(1+\rho^2+\frac{1}{3}\rho^2\cos^2\alpha\big)$$
$$-\frac{4}{15}\rho^2\cos\alpha\sin\alpha+\bigg(\frac{1}{6}-\frac{7(1+\rho^2)}{30\rho^2\cos^2\alpha}\bigg)\bigg(\frac{-\rho\sin\alpha}{\sqrt{1+\rho^2+2\rho\cos\alpha}}+\frac{\rho\sin\alpha}{\sqrt{1+\rho^2-2\rho\cos\alpha}}\bigg)$$
$$-\frac{7(1+\rho^2)\sin\alpha}{15\rho^2\cos^3\alpha}\bigg(\sqrt{1+\rho^2+2\rho\cos\alpha}+\sqrt{1+\rho^2-2\rho\cos\alpha}\bigg)$$
$$+\bigg(\frac{\rho\cos\alpha}{12}-\frac{10\rho^2+1}{60\rho\cos\alpha}+\frac{(1+\rho^2)^2}{10\rho^3\cos^3\alpha}\bigg)\bigg(\frac{\rho\sin\alpha}{\sqrt{1+\rho^2-2\rho\cos\alpha}}+\frac{\rho\sin\alpha}{\sqrt{1+\rho^2+2\rho\cos\alpha}}\bigg)$$
$$+\bigg(-\frac{\rho\sin\alpha}{12}-\frac{(10\rho^2+1)\sin\alpha}{60\rho\cos^2\alpha}+\frac{3(1+\rho^2)^2\sin\alpha}{10\rho^3\cos^4\alpha}\bigg)\times$$
$$\times\bigg(\sqrt{1+\rho^2-2\rho\cos\alpha}-\sqrt{1+\rho^2+2\rho\cos\alpha}\bigg)$$

Therefore:

$$\lim\limits_{\alpha \rightarrow 0+} \frac{(S+\frac{1}{3}S_1+\frac{3}{4}S_2)'(\alpha)}{-2\sin\alpha \cos\alpha}=\frac{1}{30\rho^2}\bigg[-40(1+\frac{4}{3}\rho^2)\sqrt{1+\frac{1}{3}\rho^2}+11\rho^4+60\rho^2+40\bigg]\geq0,$$
since this is equivalent with:

$$(11\rho^4+60\rho^2+40)^2-1600(1+\frac{4}{3}\rho^2)^2(1+\frac{1}{3}\rho^2)=121\rho^8+\frac{10040}{27}\rho^6+\frac{640}{3}\rho^4\geq 0.$$

Now the lemma follows.

\end{proof}

We can now finish the proof of our main theorem. In fact, we have to conclude that $(S+\frac{1}{3}S_1+\frac{3}{4}S_2)$ has its maximum in $\alpha=0,$ or, what is the same, $T(t),$ defined by \eqref{eq:25}, attains its maximum  in $t=1.$ Differentiating $T(t)$ two times, we get $T''(t)=\sum_{k=2}^{+\infty}k(k-1)a_kt^{k-2},$ and  Lemma \ref{sestaLema} implies that $T''(t)\leq 0.$ Hence, $T'(t)$ is decreasing and $T'(t)\geq T_{-}'(1)$, while Lemma \ref{sedmaLema} gives us $T_{-}'(1)\geq 0.$ Therefore, $T'(t)\geq 0,$ $T$ increases and has the maximum in $t=1;$ consequently $C(\rho e_1, l_{\alpha}) \leq C(\rho e_1,e_1).$

\textbf{Acknowledgements}. The author wishes to express his gratitude to Miroslav Pavlovi\'c, Milan Lazarevi\'c and Nikola Milinkovi\'c for their valuable suggestions and comments that have improved the quality of the paper. The author is partially supported by MPNTR grant, no. 174017, Serbia.


\begin{thebibliography}{23}
	
\bibitem{AndrewsAskeyRoy}
G. Andrews, R. Askey \and R. Roy
\newblock Special Functions 
\newblock {\em  Encyclopedia of Mathematics and its Applications}, 71, Cambridge University Press, Cambridge, 1999.	
	
\bibitem{AxlerBourdonRamey}
S. Axler, P. Bourdon \and W. Ramey
\newblock Harmonic Function Theory
\newblock Springer, New York, 2001.	
	
\bibitem{Burgeth}
B. Burgeth
\newblock A Schwarz Lemma for harmonic and hyperbolic-harmonic functions in higher dimensions
\newblock {\em Manuscripta Math.} 77(1992), 283-296.

\bibitem{Colonna}
F. Colonna 
\newblock The Bloch constant of bounded harmonic mappings
\newblock {\em Indiana University Math. J.}, 38(1989), 829-840.	

\bibitem{Hadamard}
J. Hadamard
\newblock Sur les fonctions entieres de la forme $e^{G(X)}$
\newblock {\em C. R. Acad. Sci.} 114(1892), 1053-1055.	

			
\bibitem{Kalaj}
 D. Kalaj
\newblock A proof of Khavinson conjecture in $\mathbb{R}^4$
\newblock {\em Bull. London Math. Soc.}, 49(4)(2017), 561-570. 


\bibitem{KalajMarkovic1}
D. Kalaj \and M. Marković
\newblock Optimal estimates for harmonic functions in the unit ball
\newblock {\em Positivity} 16(2012), 771-782.


\bibitem{KalajMarkovic2}
D. Kalaj \and M.~Marković.
\newblock Optimal Estimates for the Gradient of Harmonic Functions in the Unit Disk
\newblock {\em Complex Analysis and Operator Theory}, 7(2013), 1167-1183.

\bibitem{KalajVuorinen}
D. Kalaj \and M. Vuorinen 
\newblock On harmonic functions and the Schwarz lemma
\newblock {\em Proc. Amer. Math. Soc.}, 140(2012), 161-165.
	
\bibitem{Khavinson}
D. Khavinson
\newblock An extremal problem for harmonic functions in the ball
\newblock {\em Canadian Math. Bulletin}, 35(1992), 218-220.	

\bibitem{KresinMazya1}
G. Kresin \and V. Maz'ya
\newblock Sharp pointwise estimates for directional derivatives of harmonic function in a multidimensional ball
\newblock {\em J. Math. Sci.}, 169(2010), 167-187.
	
\bibitem{KresinMazya2}
G. Kresin \and V. Maz'ya
\newblock Optimal estimates for the gradient of harmonic functions in the multidimensional half-space
\newblock {\em Discrete Contin. Dyn. Syst.} 28(2010), 425-440.

\bibitem{KresinMazya3}
G. Kresin \and V. Maz'ya
\newblock Sharp Real-Part Theorems. A Unified Approach.
\newblock {\em  Lect. Notes in Math.}, 1903, Springer-Verlag, Berlin-Heidelberg-New York, 2007.

\bibitem{KresinMazya4}
G. Kresin \and V. Maz'ya 
\newblock Sharp real part theorems in the upper-half plane ans similar estimates for harmonic functions
\newblock {\em J. Math. Sci.}, 179(2011), 144-163.

\bibitem{KresinMazya5}
G. Kresin \and V. Maz'ya
\newblock Maximum Principles and Sharp Constants for Solutions of Elliptic and Parabolic Systems
\newblock {\em Mathematical Surveys and Monographs} 183, American Mathematical Society, 2012.



\bibitem{MacintyreRogosinski}
A. J. Macintyre \and W. W. Rogosinski
\newblock Extremum problems in the theory of analytic functions 
\newblock {\em Acta Mathematica}, 82 (1950), 275-325.

\bibitem{Markovic1}
M. Marković.
\newblock Solution to the Khavinson problem near the boundary of the unit ball
\newblock {\em Constructive Approximation}, 45(2)(2017), 243-271.

\bibitem{Markovic2}
M. Marković
\newblock  On harmonic functions and the hyperbolic metric
\newblock {\em Indag. Math.} 26(2015), 19-23.

\bibitem{Mateljevic}
M. Mateljević
\newblock Schwarz lemma and Kobayashi Metrics for harmonic and holomorphic functions 
\newblock {\em J. Math. Anal. Appl.} 464(1)(2018), 78-100.

\bibitem{Melentijevic}
P. Melentijević
\newblock Invariant gradient in refinements of Schwarz and Harnack inequalities
\newblock {\em Ann. Acad. Sci. Fenn. Math.}  43(2018), 391-399.

\bibitem{Pavlovic}
M. Pavlović
\newblock Function Classes on the Unit Disc: An Introduction
\newblock {\em De Gruyter Studies in Mathematics} 2013.

\bibitem{ProtterWeinberger}
M. H. Protter \and H. F. Weinberger 
\newblock Maximum Principles in Differential Equations
\newblock Prentice Hall, Inc, Englewood Cliffs, New Jersey (1967); Springer, Berlin-Heidelberg-New York (1984).

\bibitem{Rudin}
W. Rudin
\newblock Function Theory in the Unit Ball of $\mathbb{C}^n$
\newblock {\em Classics in Mathematics} Springer, 1980.




\end{thebibliography}
\end{document}